\documentclass[20pt, twoside, a4paper]{article}

\let\Finishall\relax            
\ifx\TestIngCommand\undefined\relax                                     
\else\input ../../../proc/ppreamb-book.tex                               
\input init.tex \fi                                                     
\let\TestIngCommand\undefined                                             

\newtheorem{example}{Example}

\newtheorem{remark}{Remark}

\usepackage{amsmath,amscd,amssymb}
\usepackage{graphics}
\newtheorem{theo}{Theorem} 
\newtheorem{lem}{Lemma}

\newtheorem{defi}{Definition}
\newtheorem{proof}{Proof}

\newskip\ttglue\ttglue=.5em plus.25em minus.15em
\def\firstname#1{\def\FIRSTNAME{#1}\ignorespaces}                  
\def\lastname#1{\def\LASTNAME{#1}\ignorespaces}
\def\middleinitial#1{\def\MIDDLEINI{#1}\ignorespaces}
\def\department#1{\def\DEPARTMENT{#1}\ignorespaces}
\def\institute#1{\def\INSTITUTE{#1}\ignorespaces}
\def\address#1{\def\ADDRESS{#1}\ignorespaces}
\def\country#1{\def\COUNTRY{#1}\ignorespaces}
\def\otheraffiliation#1{\def\OTHERAFFILIATION{#1}\ignorespaces}
\def\email#1{\def\EMAIL{#1}\ignorespaces}
\newcount\autcount\autcount=0
\newcount\affcount\affcount=0
\newcount\numcount\newcount\nummcount
\newcount\nummmcount\newcount\nummmmcount
\def\writename#1#2{\ \kern-1ex\hbox{
  \csname AUthor\the#1\endcsname\
  \edef\TESTSTR{}\expandafter\ifx\csname auTHor\the#1\endcsname\TESTSTR
  \else\csname auTHor\the#1\endcsname.\ \fi 
  \csname authOR\the#1\endcsname$^{\csname AFF\the#1\endcsname}$
  \expandafter\ifx\csname corr\number#1\endcsname\relax
  \else\thanks{Corresponding author.}\ \fi
  }\ifnum#1<#2, \else\ \kern-1ex\fi}
\def\writeemail#1{
  \nummcount=0\relax\nummmcount=0\relax
  \loop\ifnum\nummcount<\autcount\advance\nummcount by1\relax
    {\expandafter\ifnum\csname AFF\the\nummcount\endcsname=#1\relax
    \global\advance\nummmcount by1\fi}\repeat
  \nummcount=0\relax\nummmmcount=0\relax
  \loop\ifnum\nummcount<\autcount\advance\nummcount by1\relax
    {\expandafter\ifnum\csname AFF\the\nummcount\endcsname=#1\relax
    \global\advance\nummmmcount by1\relax\def\blank{}\expandafter
    \ifx\csname EMAIL\the\nummcount\endcsname\blank(no e-mail)
    \else\csname EMAIL\the\nummcount\endcsname
    \fi 
    \ifnum\nummmmcount<\nummmcount; \fi\fi}\repeat}
\long\def\BeginAuthorList#1\EndAuthorList{#1\relax
  \author{\vbox{\hsize=390pt\noindent\numcount=0\relax
    \loop\ifnum\numcount<\autcount\advance\numcount by1\relax
      \writename{\numcount}{\autcount}
      \repeat}\\[2mm]
    \vbox{\small\numcount=0\relax
      \loop\ifnum\numcount<\affcount\advance\numcount by1\relax
        \vbox{{\count0=\numcount\relax
          \loop\expandafter\ifnum\csname AFF\the\count0\endcsname
            <\numcount\relax\advance\count0 by1\relax\repeat
          $^{\csname AFF\the\count0\endcsname}$}
        \def\BLANK{}\expandafter\ifx\csname DEPT\the\numcount\endcsname
          \BLANK
          \else\csname DEPT\the\numcount\endcsname, \fi
        \csname INST\the\numcount\endcsname,
        \csname ADDR\the\numcount\endcsname,
        \csname COUN\the\numcount\endcsname
        \edef\TEST{}\expandafter\ifx\csname OTHE\the\numcount\endcsname
          \TEST
          .\else;\break\csname OTHE\the\numcount\endcsname.\fi}
        \vbox{\writeemail{\numcount}}
        \repeat}\\}}
\expandafter\def\csname x1\endcsname{}
\expandafter\def\csname x2\endcsname{}
\expandafter\def\csname x3\endcsname{}
\expandafter\def\csname x4\endcsname{}
\expandafter\def\csname x5\endcsname{}
\expandafter\def\csname x6\endcsname{}
\expandafter\def\csname x7\endcsname{}
\expandafter\def\csname x8\endcsname{}
\expandafter\def\csname x9\endcsname{}
\def\Author#1#2{\global\advance\autcount by1\relax#2
  \expandafter\edef\csname AUthor\the\autcount\endcsname{\FIRSTNAME}
  \expandafter\edef\csname auTHor\the\autcount\endcsname{\MIDDLEINI}
  \expandafter\edef\csname authOR\the\autcount\endcsname{\LASTNAME}
  \expandafter\edef\csname EMAIL\the\autcount\endcsname{\EMAIL}
  \let\tempera\"\def\"{\string\"}\expandafter\ifx\csname x\DEPARTMENT
    \endcsname\relax
    \global\advance\affcount by1\relax\let\"\tempera
    \expandafter\edef\csname DEPT\the\affcount\endcsname{\DEPARTMENT}
    \expandafter\edef\csname INST\the\affcount\endcsname{\INSTITUTE}
    \expandafter\edef\csname ADDR\the\affcount\endcsname{\ADDRESS}
    \expandafter\edef\csname COUN\the\affcount\endcsname{\COUNTRY}
    \expandafter\edef\csname OTHE\the\affcount\endcsname{\OTHERAFFILIATION}
    \expandafter\edef\csname AFF\the\autcount\endcsname{\the\affcount}
  \else\expandafter\edef\csname AFF\the\autcount\endcsname{\DEPARTMENT}
  \fi\let\"\tempera\ignorespaces}
\def\CorrespondingAuthor#1#2{
  \expandafter\xdef\csname corr\number#1\endcsname{cor}
  \Author#1{#2}}
\def\PaperTitle#1{\title{\bf#1}}
\def\Category#1{\ignorespaces}
\def\keywords#1{{\noindent \emph{Keywords:}
  \def\BLANK{}\def\TEST{#1}\ifx\BLANK\TEST(n/a).\else#1\fi}}
\setlength{\textwidth}{390truept}
\setlength{\oddsidemargin}{57truept} 
\setlength{\evensidemargin}{7truept}                                      
\setlength{\textheight}{9.8truein}
\setlength{\topmargin}{-0.5truein}

\begin{document}

\large{                                                          
\PaperTitle{Bohr Almost Periodic Sets of Toral Type}

\Category{(Pure) Mathematics}

\date{}

\BeginAuthorList
  \Author1{
    \firstname{Wayne}
    \lastname{Lawton}
    \middleinitial{M}
    \department{Department of the Theory of Functions, Institute of Mathematics and Computer Science}
    \institute{Siberian Federal University}
    \otheraffiliation{}
   \address{Krasnoyarsk}
    \country{Russian Federation}
    \email{wlawton50@gmail.com}}
\EndAuthorList
\maketitle
\thispagestyle{empty}
\begin{abstract}
A locally finite multiset $(\Lambda,c),$ $\Lambda \subset \mathbb R^n, c : \Lambda \rightarrow \{1,...,b\}$ defines a Radon measure $\mu := \sum_{\lambda \in \Lambda} c(\lambda)\, \delta_\lambda$ that is Bohr almost periodic in the sense of Favorov if the convolution $\mu*f$ is Bohr almost periodic every $f \in C_c(\mathbb R^n).$ If it is of toral type: the Fourier transform $\mathfrak F \mu$ equals zero outside of a rank $m < \infty$ subgroup, then there exists a compactification $\psi : \mathbb R^n \rightarrow \mathbb T^m$ of $\mathbb R^n,$ a foliation of $\mathbb T^m,$ and a pair $(K,\kappa)$ where $K := \overline {\psi(\Lambda)}$ and $\kappa$ is a measure supported on $K$ such that $\mathfrak F \kappa = (\mathfrak F \mu) \circ \widehat \psi$ where $\widehat \psi : \widehat {\mathbb T^m} \rightarrow \widehat {\mathbb R^n}$ is the Pontryagin dual of $\psi.$  If $(\Lambda,c)$ is uniformly discrete Bohr almost periodic and $c = 1,$ we prove that every connected component of $K$ is homeomorphic to $\mathbb T^{m-n}$ embedded transverse to the foliation and the homotopy of its embedding is a rank $m-n$ subgroup $S$ of $\mathbb Z^m,$ and we compute the density of $\Lambda$ as a function of $\psi$ and the homotopy of comonents of $K.$ For $n = 1$ and $K$ a nonsingular real algebraic variety, this construction gives all Fourier quasicrystals (FQ) recently characterized by Olevskii and Ulanovskii and suggest how to characterize FQ for $n > 1.$ 
\end{abstract}
\noindent{\bf 2010 Mathematics Subject Classification:43A60;55P15;52C23}
%
\footnote{\thanks{This work is supported by the Krasnoyarsk Mathematical 
Center and financed by the Ministry of Science and Higher Education 
of the Russian Federation in the framework of the establishment and 
development of Regional Centers for Mathematics Research and 
Education (Agreement No. 075-02-2020-1534/1).}}

\section{Preliminaries}\label{sec1}
This section defines notation and summarizes basic facts. Section \ref{sec2} 
introduces toral compactification concepts illustrated by examples based on the work of Favorov and Kolbasina \cite{favorov1,favorov2,favorov3,favorov4}, Kurasov and Sarnak \cite{kurasovsarnak}, Lagarias \cite{lagarias1,lagarias2}}, Lev, Olevskii and Ulanovskii \cite{levolevskii1,levolevskii2,levolevskii3,olevskiiulanovskii1,olevskiiulanovskii2}, and Meyer \cite{meyer1,meyer2,meyer3,meyer4,meyer5}. Section \ref{sec3} contains our main result: a detailed description of $m$--dimensional toral compactifications $K \subset \mathbb T^m$ of Bohr almost periodic Delone subsets $\Lambda \subset \mathbb R^n.$ We prove that the density of $\Lambda$ is determined by the homotopy classes of connected components of $K$ and the spectrum of $\Lambda$ is determined by a measure on $K.$ Section \ref{sec4} formulates research questions. 
\\ \\
$:=$ means `is defined to equal' and iff means `if and only if'.
$\mathbb N := \{1,2,3,...\},$ $\mathbb Z,$ $\mathbb Q,$ $\mathbb R,$ $\mathbb C$ are the natural, integer, rational, real, and complex numbers, $\mathbb T := 
\{z \in \mathbb C:|z| = 1\}.$ 
For $m, n \in \mathbb N$ and $A$ an integral domain, $A^{m \times n}$ is the set of $m$ by $n$ matrices over $A,$ and
$GL(m,A) \subset A^{m \times m}$ is its group of invertible matrices. $SO(n,\mathbb R)$ is the special orthogonal group. For a subgroup $S \subset \mathbb Z^m,$ $S_1 := \{k \in \mathbb Z^m: ak \in S \hbox{ for some } a \in \mathbb N\}$ is the smallest projective subgroup containing $S.$ There exists a unique subgroup $S_2$ with $S_1 \oplus S_2 = \mathbb Z^m.$
Smith's normal form theorem \cite{newman, smith} implies: (i) card $S_1/S < \infty,$ (ii) if $r := \hbox{rank } S = m,$ then $S_1 = \mathbb Z^m,$ (iii) if $r < m,$ there exists $K_1 \in \mathbb Z^{m \times r},$ $K_2 \in \mathbb Z^{m \times (m-r)}$ with $[K_1 \, K_2] \in GL(m,\mathbb Z),$ $S_1 = K_1\mathbb Z^{r},$
$S_2 := K_2\mathbb Z^{m-r}.$
\\
For $n \in \mathbb N,$ $\mathbb R^n$ is $n$--dimensional Euclidean space
consisting of column vectors, $B(0,R) \subset \mathbb R^n, R > 0$ is the open ball of radius $R$ with center $0$ and 
$V_n(R) := \frac{\pi^{\frac{n}{2}}}{\Gamma(\frac{n}{2}+1}\, R^n$ 
is its volume. We identify $\mathbb R^{m \times n}$ with the set of continuous homomorphisms from $\mathbb R^n$ to $\mathbb R^m,$ which equals the set of $\mathbb R$--linear maps from $\mathbb R^n$ to $\mathbb R^m.$  
$\rho_m : \mathbb R^m \rightarrow \mathbb T^m$ is the homomorphism 
$\rho_m(\,[x_1,...,x_m]^T\,) := [e^{2\pi i x_1},...,e^{2\pi i x_n}]^T.$
For $k = [k_1,...,k_m]^T \in \mathbb Z^m$
define $\zeta_k : {\mathbb T^m} \rightarrow \mathbb T$ by 
$\zeta_k(\, [z_1,...,z_m]^T \, ) := z_1^{k_1}\cdots z_m^{k_m}.$ 
Pontryagin's duality theorem (\cite{rudin}, Lemma 2.1.3) implies that if 
$G \subset \mathbb T^m$ is a closed subgroup, then its annihilator subgroup, defined by
$G^{\perp} := \{k \in \mathbb Z^m: \zeta_k(G) = \{1\} \},$ 
satisfies 
$G = \{g \in \mathbb T^m: \zeta_k(g) = 1 \hbox{ for every } k \in G^{\perp}\}.$ 
\\ \\
$C(\mathbb T^m)$ is the Banach algebra of continuous $f : \mathbb T^m \rightarrow \mathbb C$ with norm $||f|| := \max_{x \in \mathbb T^m} |f(x)|$ and $T(\mathbb T^m) := \hbox{span } \{\zeta_k:k \in \mathbb Z^m\}$ is its subalgebra of trigonometric polynomials.
The Stone--Weiestrass theorem (\cite{steinweiss}, Theorem 1.7) implies that $\overline {T(\mathbb T^m)} = C(\mathbb T^m).$
For $y \in \mathbb R^n$ define $\chi_y : {\mathbb R^n} \rightarrow \mathbb T$ by $\chi_y(x) := e^{2\pi i y^T\, x}.$ $C_b(\mathbb R^n)$ is the Banach algebra of bounded continuous $f : \mathbb R^n \rightarrow \mathbb C$ with norm $||f|| := \sup_{x \in \mathbb R^n} |f(x)|,$
$T(\mathbb R^n) := \hbox{span }\{\chi_y : y \in \mathbb R^n\}$ is its  subalgebra of trigonometric polynomials, and $U(\mathbb R^n) := \overline {T(\mathbb R^n)}$ is the Banach algebra of Bohr (or uniform) almost periodic functions \cite{bohr1,bohr2}. For $p \in [1,\infty),$ the completion of $T(\mathbb R^n)$ with respect to the norm
\begin{equation}\label{Bes}
	 ||f||_{B,p} := \left( \limsup_{R \rightarrow \infty} \frac{\int_{B(0,R)} |f|^p}{V_n(R)}\right)^\frac{1}{p}
\end{equation}
is the Banach space $B^p(\mathbb R^n)$ of Besicovitch almost periodic functions \cite{besicovitch1,besicovitch2}. $U(\mathbb R^n) \subset B^p(\mathbb R^n),$ and $p > q$ implies $B^p(\mathbb R^n) \subset B^q(\mathbb R^n).$ For $f \in B^1(\mathbb R^n)$ the following limit exists
\begin{equation}\label{hat}
		 \widehat f\, (y) := \lim_{R \rightarrow \infty} \frac{\int_{B(0,R)} f \, \chi_{-y}}{V_n(R)}, \ \ y \in \mathbb R^n
\end{equation}
and  defines a function $\widehat f : \mathbb R^n \rightarrow \mathbb C$ that is nonzero only on  a countable set $\Omega(f)$ called the spectrum of $f.$ This gives the formal expansion
\begin{equation}\label{formexpf}
	f \sim \sum_{y \in \Omega(f)} \widehat f\, (y)\, \chi_y.
\end{equation}
Every $f \in B^2(\mathbb R^n)$ satisfies Parseval's equation
$
	||f||_{B,2}^2 = \sum_{y \in \Omega(f)} |\widehat f\, (y)|^2,
$
and every square summable $g : \mathbb R^n \rightarrow \mathbb C$ equals $\widehat f$ for some $f \in B^2(\mathbb R^n).$
$\Omega(f)$ is the smallest set such that
$f$ can be approximated by elements in span $\{\, \chi_y \, : \, y \in \Omega(f)\, \}.$ 
If the space $C_c(\mathbb R^n)$ is equipped with the inductive limit topology (\cite{treves}, p. 515), then its dual is the space of Radon measures (\cite{treves}, p. 217--222). The Fourier transform $\mathfrak F : L^1(\mathbb R^n) \rightarrow C_0(\mathbb R^n)$ is defined by $(\mathfrak F \, f)(y) := \int_{\mathbb R^n} f\, \chi_{-y}.$
$\mathcal S(\mathbb R^n)$ is the Schwartz space \cite{schwartz}, consisting of smooth functions $f : \mathbb R^n \rightarrow \mathbb C$ all of whose derivatives vanish faster than $(1+||x||)^{-N}$ for every $N \in \mathbb N$ and equipped with the appropriate Fr\'{e}chet space topology. The Fourier transform $\mathfrak{F} : S(\mathbb R^n) \rightarrow S(\mathbb R^n)$ is a bijection. The dual space $\mathcal S^{\, \prime}(\mathbb R^n)$ is the Fr\'{e}chet space of tempered distributions 
(\cite{treves}. p. 272, Definition 25.2). The Fourier transform extends to give a bijection
$\mathfrak F : \mathcal S^{\, \prime}(\mathbb R^n) \rightarrow \mathcal S^{\, \prime}(\mathbb R^n)$
\begin{equation}\label{FTdist}
	(\mathfrak F \, \eta)(f) := \eta(\mathfrak F \, f), \ \ \eta \in \mathcal S^{\, \prime}(\mathbb R^n), \ f \in S(\mathbb R^n).
 \end{equation}
\begin{defi}\label{def1}
A Radon measure $\mu$ on $\mathbb R^n$ is Bohr; Besicovitch almost periodic if convolution with it satisfies 
$\mu*\, C_c(\mathbb R^n) \subset U(\mathbb R^n); B^p(\mathbb R^n).$ A tempered distribution $\mu$ on 
$\mathbb R^n$ is Bohr;Besicovitch almost periodic if 
$\mu* S(\mathbb R^n) \subset U(\mathbb R^n); B^p(\mathbb R^n).$ 
An atomic measure is a Radon measure $\mu$ such that there exists $\Lambda \subset \mathbb R^n$ and $c : \Lambda \rightarrow \mathbb C$ such that $\mu = \sum_{\lambda \in \Lambda} c(\lambda)\, \delta_\lambda,$ where $\delta_\lambda$ is the point measure at $\lambda.$ 
A crystalline measure is an atomic measure $\mu$ that is also a tempered distribution whose Fourier transform is also an atomic measure, so $\mathfrak{F}(\mu) = \sum_{\omega \in S} a(\omega) \, \delta_{\omega}$ is an atomic measure, and both $\Lambda$ and $\Omega(\mu)$ have no limit points (locally finite). $\mu$ is then a Fourier quasicrystal if both 
$|\mu| := \sum_{\lambda \in \Lambda} |c(\lambda| \delta_\lambda,$ and
$|\mathfrak{F}(\mu)| := \sum_{\omega \in S} |a(\omega)| \, \delta_{\omega}$ are tempered distributions.
\end{defi}
\begin{remark}\label{rem1}
Meyer proved that crystalline measures are Bohr almost periodic tempered distributions (\cite{meyer4}, Lemma 2), but not necessarily Bohr almost periodic measures (\cite{meyer4}, Theorems 4 and 5).
\end{remark}
If $\mu$ is an atomic measure and $\mu*C_c(\mathbb R^n) \subset B^1(\mathbb R^n)$ then the limit
\begin{equation}\label{hatmu}
	\widehat \mu\, (y) := \lim_{R \rightarrow \infty} \frac{\sum_{\lambda \in \Lambda \cap B(0,R)} c(\lambda) \, \chi_{-y}(\lambda)}{V_n(R)}, \ \ y \in \mathbb R^n
\end{equation}
exists and equals $0$ except at a countable set called the spectrum $\Omega(\mu)$ of $\mu.$ This gives the following formal expansions:
\begin{equation}\label{formexpmu}
	\mu \sim \sum_{y \in \Omega(\mu)} \widehat \mu\,(y) \, \chi_{y}, \ \ \ \
	\mathfrak F \mu \sim \sum_{y \in \Omega(\mu)} \widehat \mu \, (y)\, \delta_{y}.
\end{equation}
A multiset is a pair $(\Lambda,c),$ where $\Lambda \subset \mathbb R^n$ is nonempty and $c : \mathbb R^n \rightarrow \mathbb N$ satisfies $c(\lambda) > 0$ iff $\lambda \in \Lambda.$ They form a commutative semigroup under the binary operation $(\Lambda_1,c_1)+(\Lambda_2,c_2) := (\Lambda,c)$ where $\Lambda := \Lambda_1 \cup \Lambda_2$ and $c :=c_1 + c_2.$ A $\Lambda$ is a multiset $(\Lambda,1$ where $\Lambda$ is a tranlate of a discrete rank $n$ subgroup of $\mathbb R^n.$ A multiset is trivial if it is the sum of lattices. 
\begin{equation}\label{R1}
	R_1(\Lambda) := \inf \left\{||\lambda_2-\lambda_1||:\lambda_1,\lambda_2 \in \Lambda,\, \lambda_1 \neq \lambda_2\right\} \in [0,\infty),
\end{equation}
\begin{equation}\label{R2}
	R_2(\Lambda) := \inf \{R \in [0,\infty] : \bigcup_{\lambda \in
\Lambda} B(\lambda,R) = \mathbb R^n\}.
\end{equation}
\begin{equation}\label{eqb}
	b(\Lambda,c) := \lim_{R \rightarrow 0} \sup_{x \in \mathbb R^n} \sum_{\lambda \in \Lambda \cap B(x,R)} c(\lambda) \in \mathbb N \cup \{\infty\},
\end{equation}
\begin{defi}\label{def2} $\Lambda \subset \mathbb R^n$ is uniformly discrete; syndetic if $R_1(\Lambda) > 0; R_2(\Lambda) < \infty,$ Delone (aka Delaunay) \cite{delone} if it is uniformly discrete and syndetic, finite type if there exists a finite $F \subset \mathbb R^n$ with $\Lambda - \Lambda \subset \Lambda + F,$ quasiregular \cite{galiulin} if both $\Lambda$ and $\Lambda - \Lambda$ are Delone, and a Meyer set \cite{moody} if it is Delone and finite type. A multiset $(\Lambda,c)$ is uniformly finite if $b(\Lambda,c) < \infty.$
\end{defi}
\begin{remark}\label{rem2}
Lagarius proved (\cite{lagarias1}, Theorem 1.1) that $\Lambda$ is quasiregular iff it is a Meyer set.
\end{remark}
A uniformly finite multiset $(\Lambda,c)$ defines a Radon measure 
\begin{equation}\label{mu}
	\mu(\Lambda,c) := \sum_{\lambda \in \Lambda} c(\lambda)\, \delta_\lambda.
\end{equation}
Then $\mu := \mu(\Lambda,c)$ defines, by convolution, the map
$\mu* : C_c(\mathbb R^n) \rightarrow C_b(\mathbb R^n)$
\begin{equation}\label{TLambda}
	(\mu*f)(x) = \sum_{\lambda \in \Lambda} c(\lambda)\, f(x-\lambda), \ \ f \in C_c(\mathbb R^n),\, x \in \mathbb R^n
\end{equation}
$\mu$ is also a tempered distribution and
$
	\mathfrak F (\mu*f) = (\mathfrak F \mu)\,(\mathfrak F \,f), \ \ f \in C_c(\mathbb R^n).
$
\begin{defi}\label{def3}
A uniformly finite multiset $(\Lambda,c)$ is Bohr; Besicovitch almost periodic if $\mu*\, C_c(\mathbb R^n) \subset U(\mathbb R^n); B^p(\mathbb R^n).$ We define it to be a Fourier quasicrystal if $\mu := \mu(\Lambda,c)$ is a quasicrystal. 
\end{defi}
If $(\Lambda,c)$ is trivial, then for every $f \in C_c(\mathbb R^n,$ $\mu*f$ is a sum or periodic functions hence $(\Lambda,c)$ is Bohr almost periodic. 
\begin{remark}\label{rem3}
Lagarias (\cite{lagarias2}, Problem 4.4) asked if every Bohr almost periodic multiset is trivial. Favorov answered no by proving that every Bohr almost periodic perturbation of a lattice is Bohr almost periodic \cite{favorov1,favorov2}. He also proved that every Bohr almost periodic Meyer set is trivial \ref{ex1}, (\cite{favorov3}, Theorem 2). 
\end{remark}
The Poisson summation formula (\cite{steinweiss}, Theorem 2.4) implies that trivial multisets are Fourier quasicrystals. Olevskii and Ulanovskii extended work with Lev \cite{levolevskii1,levolevskii2,levolevskii3} and work of Meyer \cite{meyer3,meyer4} and Kurasov and Sarnak \cite{kurasovsarnak} by proving the following deep result (\cite{olevskiiulanovskii2}, Theorem 8):
\begin{theo}\label{thm1}
For $n = 1,$ $(\Lambda,c)$ is a Fourier quasicrystal iff there exists a trigonometric polynomial whose extension to an entire function only has real roots $\Lambda$ with multiplicities $c.$ 
\end{theo}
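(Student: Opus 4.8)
The plan is to pass, in both directions, through the logarithmic derivative $g := p'/p$ of the entire extension $p(z) = \sum_{j=1}^N a_j\,\chi_{\omega_j}(z)$, $z \in \mathbb C$, of the trigonometric polynomial, where $\omega_1 < \cdots < \omega_N$. The poles of $g$ are exactly the zeros of $p$, that is the points $\lambda \in \Lambda$, with residues equal to their multiplicities $c(\lambda)$. When all zeros are real, $g$ is holomorphic on each open half plane $\{\operatorname{Im} z > 0\}$ and $\{\operatorname{Im} z < 0\}$, with boundary values $g^{+} := g(\cdot + i0)$ and $g^{-} := g(\cdot - i0)$; the Sokhotski--Plemelj jump formula then gives, as tempered distributions, $g^{-} - g^{+} = 2\pi i\,\mu$, where $\mu := \mu(\Lambda,c)$. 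Hence $\mathfrak F \mu = (2\pi i)^{-1}\big(\mathfrak F g^{-} - \mathfrak F g^{+}\big)$, and everything reduces to the two one--sided transforms $\mathfrak F g^{\pm}$.

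For the forward implication I would expand $g$ in each half plane. Writing $p(z) = a_1\,\chi_{\omega_1}(z)\big(1 + \sum_{j \ge 2}(a_j/a_1)\,\chi_{\omega_j - \omega_1}(z)\big)$ with $\omega_j - \omega_1 > 0$, the bracket tends to $1$ as $\operatorname{Im} z \to +\infty$, so $\log p$ has a convergent expansion $\log a_1 + 2\pi i\,\omega_1 z + \sum_k b_k\,\chi_{\alpha_k}(z)$ whose exponents $\alpha_k > 0$ lie in the semigroup generated by the gaps $\omega_j - \omega_1$; differentiating yields $g^{+} = 2\pi i\,\omega_1 + \sum_k 2\pi i\,\alpha_k b_k\,\chi_{\alpha_k}$, so $\mathfrak F g^{+} = 2\pi i\,\omega_1\,\delta_0 + \sum_k 2\pi i\,\alpha_k b_k\,\delta_{\alpha_k}$ is atomic and supported in $[0,\infty)$. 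The symmetric computation in the lower half plane, factoring out the dominant term $\chi_{\omega_N}$, makes $\mathfrak F g^{-}$ atomic and supported in $(-\infty,0]$ with $\delta_0$--coefficient $2\pi i\,\omega_N$. Subtracting, $\mathfrak F \mu$ is atomic, so $\mu$ is crystalline, and its mean density is the $\delta_0$--coefficient $\widehat\mu(0) = \omega_N - \omega_1$. Temperedness of $|\mu|$ then follows from the argument principle, which bounds $\#(\Lambda \cap [-T,T])$ linearly in $T$ and bounds $b(\Lambda,c) < \infty$; temperedness of $|\mathfrak F \mu|$ follows because the $b_k$ are Taylor coefficients of $\log(1+(\text{small}))$ while the number of exponents $\alpha_k \le T$ grows only polynomially in $T$. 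This shows $(\Lambda,c)$ is a Fourier quasicrystal.

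For the converse I would start from the quasicrystal data $\mathfrak F \mu = \sum_{s \in S} a(s)\,\delta_s$, atomic and tempered, and split its support by sign, $S = S_{-} \sqcup \{0\} \sqcup S_{+}$. I then define $g^{+} := c_{+} - 2\pi i \sum_{s \in S_{+}} a(s)\,\chi_s$ and $g^{-} := c_{-} + 2\pi i \sum_{s \in S_{-}} a(s)\,\chi_s$ with constants chosen so that $c_{-} - c_{+} = 2\pi i\,a(0)$; by construction the jump relation $g^{-} - g^{+} = 2\pi i\,\mu$ is restored. Because their spectra are one--sided, each $g^{\pm}$ extends holomorphically and boundedly to the corresponding open half plane (a Paley--Wiener phenomenon), and off $\Lambda$ the two boundary values agree, so the edge--of--the--wedge theorem glues them into a single meromorphic function $g$ on $\mathbb C$ whose only singularities are simple poles at the real points $\lambda$ with residues $c(\lambda) \in \mathbb N$. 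Integrality of the residues makes $\oint g \in 2\pi i\,\mathbb Z$ around every loop, so $p(z) := \exp\!\int^{z} g$ is a single--valued entire function whose zero set is exactly $(\Lambda,c)$ and all of whose zeros are real.

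The main obstacle is the \emph{finiteness} step: one must show that this $g$ is the logarithmic derivative of a genuine \emph{trigonometric} polynomial, i.e. that $p = \sum_{j=1}^N a_j\,\chi_{\omega_j}$ has only finitely many frequencies, rather than $\exp\!\int g$ being an infinite generalized Dirichlet series. Nothing in the hypotheses bounds the number of generating frequencies a priori, so this is where the quasicrystal conditions must be used in full. The plan is to combine the temperedness and atomicity of $\mathfrak F \mu$ (to control the half--plane growth of $g^{\pm}$ and hence of $\log|p|$) with the positivity and integrality of the masses $c(\lambda)$ (which force $\exp\!\int g$ to close up to a single--valued exponential sum), and then to invoke a Levin--type / quasi--analyticity argument for functions bounded in a half plane to conclude that $p$ has finite exponential order with finitely many exponents. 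I expect the two delicate points to be (i) extracting uniform bounds on $g$ near the real axis from temperedness alone, and (ii) the passage from ``one--sided atomic boundary spectrum'' to ``finitely many generating frequencies,'' which is the genuine content of the theorem of Olevskii and Ulanovskii.
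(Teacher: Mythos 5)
The paper itself does not prove Theorem \ref{thm1}: it is quoted as a ``deep result'' of Olevskii and Ulanovskii (\cite{olevskiiulanovskii2}, Theorem 8), with no internal proof to compare against, so your proposal must stand on its own. Its skeleton --- pass through $g = p'/p$, use the Sokhotski--Plemelj jump $g^- - g^+ = 2\pi i \mu$, exploit one-sided spectra in the two half planes, and in the converse glue $g^\pm$ by edge-of-the-wedge and exponentiate using integrality of the residues --- is indeed the strategy this circle of results is built on. But there is a genuine gap, and you flag it yourself: in the converse direction, the passage from ``$g$ is meromorphic on $\mathbb C$, holomorphic with one-sided spectrum in each half plane, with simple real poles of positive integer residue'' to ``$\exp\int^z g$ is a \emph{trigonometric polynomial}, i.e.\ has finitely many frequencies'' is not proved; it is deferred to an unspecified ``Levin-type / quasi-analyticity argument.'' That step is not a technical detail to be outsourced: it is essentially the entire content of the Olevskii--Ulanovskii theorem. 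Without it, $\exp\int^z g$ could a priori be an infinite Dirichlet series with real zeros, and no standard half-plane growth lemma converts one-sided atomic tempered spectrum into finiteness. A proof that stops exactly where the real difficulty begins is an outline, not a proof.

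The forward direction as written also has quantitative gaps. The expansion $\log p = \log a_1 + 2\pi i\,\omega_1 z + \sum_k b_k \chi_{\alpha_k}$ converges only in a half plane $\{\operatorname{Im} z > H\}$ where the bracket is small, and the coefficient bound available there is $|b_k| \le C e^{2\pi \alpha_k H}$ --- exponential in $\alpha_k$, which is useless for concluding that the boundary distribution $\mathfrak F g^+$ at $\operatorname{Im} z = 0$ is a \emph{tempered} atomic measure. To close this you need a quantitative consequence of real-rootedness, e.g.\ a lower bound of the form $|p(x+iy)| \gtrsim |y|^{d}$ uniformly in $x$ for small $|y|$, giving $\sup_{\operatorname{Im} z = \epsilon}|p'/p| = O(\epsilon^{-N})$ and then polynomial coefficient bounds by optimizing $\epsilon \sim 1/\alpha_k$; nothing of this sort appears in the proposal. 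Relatedly, the claim in the converse that each $g^\pm$ extends ``holomorphically and boundedly'' to its open half plane is false near the axis (poles of $g$ sit on $\mathbb R$); the correct statement, and the one the edge-of-the-wedge argument must be run with, is a polynomial bound in $1/|\operatorname{Im} z|$, which is what tempered distributional boundary values actually provide. In short: the decomposition and jump formula are the right skeleton, but both the decisive finiteness step and the temperedness estimates are missing.
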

For a trigonometric polynomial $p$ let $Z_r(p)$ be its set of (real) roots, 
$Z_c(p)$ the set of complex roots of its entire extension $P,$ and 
$\rho_r(p);\rho_c(p)$ their densities (counted with multiplicities). 
The following result characterizes trigonometric polynomials 
whose entire extensions have only real roots.
\begin{theo}\label{thm2}
Let 
$p(x) := \sum_{j=1}^d c(d)\chi_{y_j},$ $d \geq 2,$ $y_1 < \cdots < y_d,$ $c(j) \in \mathbb C \backslash \{0\},$ be a trigonometric polynomial.
Then $Z_r(p) = Z_c(p)$ iff $\rho_r(p) = \rho_c(p).$  Furthermore, $\rho_c(p) = y_d-y_1.$
\end{theo}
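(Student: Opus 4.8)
The plan is to reduce both assertions to a single residue/argument count for the entire extension $P(z)=\sum_{j=1}^{d}c(j)\,e^{2\pi i y_{j}z}$, and to treat the density formula first. Writing $z=u+iv$, one has $|c(j)e^{2\pi i y_{j}z}|=|c(j)|\,e^{-2\pi y_{j}v}$; since $y_{1}<y_{j}<y_{d}$ for $1<j<d$, the $j=1$ term dominates the sum of the others once $v$ is large and positive, and the $j=d$ term dominates once $v$ is large and negative. Hence there is $H>0$ with $P(z)\neq 0$ for $|v|>H$, so every zero has a well-defined real part. To obtain $\rho_{c}(p)=y_{d}-y_{1}$ I would apply the argument principle to the rectangle $\Gamma_{R}$ with vertices $\pm R\pm iH$ (letting $R\to\infty$ through values that avoid zeros). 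On the top edge $P\approx c(1)e^{2\pi i y_{1}z}$, whose phase $\arg c(1)+2\pi y_{1}u$ changes by $-4\pi y_{1}R$ as $u:R\to -R$; on the bottom edge $P\approx c(d)e^{2\pi i y_{d}z}$ contributes $+4\pi y_{d}R$; the two vertical edges contribute $O(1)$ uniformly in $R$. Thus the number of zeros with real part in $[-R,R]$ is $N(R)=2(y_{d}-y_{1})R+O(1)$, the limit defining $\rho_{c}(p)$ exists, and $\rho_{c}(p)=y_{d}-y_{1}$. This is the classical Langer/Jessen--Tornehave count for zeros of an exponential sum in a strip.

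The implication $Z_{r}(p)=Z_{c}(p)\Rightarrow\rho_{r}(p)=\rho_{c}(p)$ is immediate, since the two multisets then coincide; and $Z_{r}\subseteq Z_{c}$ always gives $\rho_{r}\le\rho_{c}$, with equality exactly when the non-real zeros have real-part density zero. The transparent model for the converse is the commensurable case $y_{j}=m_{j}/T$ with $m_{1}<\cdots<m_{d}\in\mathbb Z$: the substitution $w=e^{2\pi i z/T}$ gives $P(z)=w^{m_{1}}Q(w)$, where $Q(w):=\sum_{j}c(j)\,w^{m_{j}-m_{1}}$ is an ordinary polynomial of degree $m_{d}-m_{1}$ with $Q(0)=c(1)\neq 0$. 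Since $z\mapsto w$ is $T$-periodic and sends the real axis onto $\{|w|=1\}$, each period carries exactly $m_{d}-m_{1}$ zeros of $P$ (recovering $\rho_{c}=y_{d}-y_{1}$), and the real ones correspond bijectively to the roots of $Q$ on the unit circle. Hence $\rho_{r}=\rho_{c}$ forces all $m_{d}-m_{1}$ roots of $Q$ onto $\{|w|=1\}$, i.e. every zero of $P$ is real, which is $Z_{r}=Z_{c}$.

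For the general (incommensurable) case I would pass to the torus underlying the paper's construction. Let $r$ be the rank of the $\mathbb Z$-module generated by $y_{1},\dots,y_{d}$, choose a rational basis $\beta_{1},\dots,\beta_{r}$ and integers $A_{j\ell}$ with $y_{j}=\sum_{\ell}A_{j\ell}\beta_{\ell}$, put $A_{j}:=(A_{j1},\dots,A_{jr})^{T}\in\mathbb Z^{r}$, and set $F:=\sum_{j}c(j)\,\zeta_{A_{j}}\in C(\mathbb T^{r})$. Then $p(x)=F(\,\cdot\,)$ is the restriction of $F$ to the image in $\mathbb T^{r}$ of the line $x\mapsto(\beta_{1}x,\dots,\beta_{r}x)$, whose slopes are rationally independent, so by Weyl equidistribution the line is uniformly distributed. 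Viewing $F$ as the Laurent polynomial $\sum_{j}c(j)\,w^{A_{j}}$ on $(\mathbb C^{*})^{r}$, both densities become averages over $\mathbb T^{r}$: $\rho_{r}(p)$ is the intersection density of the flow with the real zero set $\{F=0\}\subset\mathbb T^{r}$, while $\rho_{c}(p)$ is its intersection density with the complex zero variety, which the first paragraph identifies with $y_{d}-y_{1}$. Equality of the two densities should then force the complex zero variety to lie inside the distinguished torus $\{|w_{\ell}|=1\}$, which is precisely the assertion that every complex zero of $P$ is real.

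The main obstacle is this last step. A priori $\rho_{r}=\rho_{c}$ only says that the non-real zeros have density zero, not that there are none, and an isolated non-real zero carries no density; the substance is therefore a rigidity statement, namely that for these exponential sums any non-real zero forces a positive-density family of non-real zeros. Equivalently, the analytic zero variety in $(\mathbb C^{*})^{r}$ must meet the Kronecker flow in a set of positive density unless it is contained in $\{|w_{\ell}|=1\}$. I would extract this from the mean-motion theory of Jessen and Tornehave for almost periodic functions in a strip, using the commensurable model of the second paragraph as a guide; the delicate point is to run the equidistribution uniformly near the (possibly singular) zero variety so as to control the counting as $R\to\infty$. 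Finally, Theorem~\ref{thm1} confirms that the extremal case $\rho_{r}=\rho_{c}$ is exactly the Fourier quasicrystal situation, which is consistent with the conclusion $Z_{r}=Z_{c}$.
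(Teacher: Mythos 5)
Your density computation $\rho_c(p)=y_d-y_1$ via the argument principle on rectangles is sound (the paper simply cites Titchmarsh for this), the trivial direction of the equivalence is fine, and your commensurable-case argument via the substitution $w=e^{2\pi i z/T}$ is correct. But the general, incommensurable case contains a genuine gap, and you name it yourself: from $\rho_r(p)=\rho_c(p)$ you only get that the non-real zeros have zero density, and the theorem then hinges on a rigidity statement --- that a \emph{single} non-real zero of $P$ forces a positive-density family of non-real zeros. Your proposal defers this to ``mean-motion theory of Jessen and Tornehave'' and to an unexecuted uniform-equidistribution argument near a possibly singular zero variety in $(\mathbb C^*)^r$; nothing in the proposal actually establishes it, so the central implication $\rho_r(p)=\rho_c(p)\Rightarrow Z_r(p)=Z_c(p)$ is not proved.

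The paper closes exactly this gap with a short, self-contained argument that entirely avoids the torus and Kronecker-flow apparatus of your third and fourth paragraphs. Suppose $P$ has a non-real root $z$. Choose a circle $C$ centered at $z$ with $C\cap\mathbb R=\emptyset$, containing no root of $P$ other than $z$, and on which $|P|\geq\epsilon>0$. The restriction of $P$ to any horizontal strip $|\mathrm{Im}\,w|\leq B$ is almost periodic in $\mathrm{Re}\,w$, uniformly in the strip (it is a finite exponential sum whose coefficients $c(j)e^{-2\pi y_j v}$ are uniformly bounded for $|v|\leq B$), so Bohr's theory yields a \emph{syndetic} set $S\subset\mathbb R$ of $\epsilon$-almost periods: $|P(w)-P(w+s)|<\epsilon$ for all $w\in C$ and $s\in S$. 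Rouch\'{e}'s theorem then places a root of $P$ inside $C+s$ for every $s\in S$, and each such root is non-real since $C+s$ stays off the real axis. Syndeticity of $S$ gives the non-real zeros positive density, hence $\rho_c(p)>\rho_r(p)$, which is the contrapositive you need. This argument makes no distinction between the commensurable and incommensurable cases, so your separate treatment of the rational case becomes unnecessary; the one genuinely new ingredient you are missing is precisely this combination of uniform almost periodicity in strips with Rouch\'{e}'s theorem.
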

\begin{proof}
Clearly $Z_r(p) \subset Z_c(p)$ so $\rho_r(p) \leq \rho_c(p).$ If $P$ had a nonreal root $z$ then there would exists a circle $C$ centered at $z$ such that $C \cap \mathbb R = \phi,$ $P$ has only the root $z$ inside $C,$ $|P|$ has a minimum value $\epsilon > 0$ on $C,$ and the winding number of $P$ on $C$ equals the multiplicity of the root $z.$ Choose $B > 0$ such that $|y| < B$ for all $x+iy \in C.$ Since $P(x+iy)$ is almost periodic in $x$ uniformly for $|y| \leq B,$ there exists a syndetic set $S \subset \mathbb R$ such that $|P(x+iy) - P(x+s+iy)| < \epsilon$ for all $x+iy \in C$ and $s \in S.$ By Rouche's theorem $P$ has a root inside the circle $C+s$ for every $s \in S.$ Since $S$ is syndetic, the density of non real zeros is $> 0$ hence $Z_c(p) > \rho(Z(p).$ This implies the first assertion. The second assertion is a classic result by Titschmarsh (\cite{titchmarsh}, Theorem 1). 
\end{proof}
\section{Toral Compactifications of $\mathbb R^n$ and $(\Lambda,c)$}\label{sec2}
Let $\mathbb R^n_d$ be the group $\mathbb R^n$ equipped with the discrete topology. Its Pontryagin dual is  the Bohr compactification $B(\mathbb R^n)$ 
(\cite{rudin}, 1.8). Bohr showed that $U(\mathbb R^n)$ corresponds to $C(B(\mathbb R^n))$ \cite{bohr1,bohr2}, and Besicovitch showed that $B^p(B(\mathbb R^n))$ corresponds to $L^p(B(\mathbb R^n))$ \cite{besicovitch1,besicovitch2}. The Bohr compactification is very large, in particular, it is not first countable. Many almost periodic functions and related concepts can be studied using smaller compactifications. These include trigonometric polynomials and the model sets pioneered by Meyer \cite{meyer1, meyer2} in the context of harmonic analysis and number theory and later discovered empirically as a class of quasicrystals by Shechtman and colleagues \cite{schectman}.
\begin{defi}\label{toralcomp}
A toral compactification of dimension $m$ of $\mathbb R^n$ is a pair $(\mathbb T^m,\psi)$ where $\psi : \mathbb R^n \rightarrow \mathbb T^m$ is a continuous homomorphism with dense image.
\end{defi}
A toral compactification defines an ergodic action of $\mathbb R^n$ on $\mathbb T^n$
$
	x(z) := \psi(x)z, \ \ x \in \mathbb R^n, \, z \in \mathbb T^m
$
and an associated foliation whose leaves are the orbits of this action.
%
%
\begin{lem}\label{lem1}
The toral compactifications of dimension $m$ of $\mathbb R^n$ coincide with the set of pairs $(\mathbb T^m,\psi)$ where $\psi = \rho_m \circ M$ $($composition$)$ and $M \in \mathbb \mathbb R^{m \times n}$ has rows that are linearly independent over $\mathbb Q.$
\end{lem}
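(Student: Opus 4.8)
The plan is to reduce the statement to two independent facts: (i) every continuous homomorphism $\psi : \mathbb R^n \rightarrow \mathbb T^m$ factors as $\rho_m \circ M$ for a unique $M \in \mathbb R^{m \times n},$ and (ii) the image of such a map is dense iff the rows of $M$ are linearly independent over $\mathbb Q.$ The first is a lifting statement, the second a direct Pontryagin-duality computation.

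First I would establish the factorization. Given a continuous homomorphism $\psi : \mathbb R^n \rightarrow \mathbb T^m,$ use that $\rho_m : \mathbb R^m \rightarrow \mathbb T^m$ is a covering homomorphism and $\mathbb R^n$ is simply connected to lift $\psi$ to a continuous map $\tilde\psi : \mathbb R^n \rightarrow \mathbb R^m$ with $\tilde\psi(0) = 0$ and $\rho_m \circ \tilde\psi = \psi.$ A standard uniqueness-of-lifts argument then shows $\tilde\psi$ is itself a homomorphism: the two continuous maps $(x,y) \mapsto \tilde\psi(x+y)$ and $(x,y) \mapsto \tilde\psi(x) + \tilde\psi(y)$ both lift $(x,y) \mapsto \psi(x+y)$ and agree at the origin, hence coincide on the connected domain $\mathbb R^{2n}.$ By the identification of $\mathbb R^{m \times n}$ with continuous homomorphisms $\mathbb R^n \rightarrow \mathbb R^m$ recorded in the preliminaries, $\tilde\psi = M$ for a unique $M,$ so $\psi = \rho_m \circ M.$ Conversely every $\rho_m \circ M$ is visibly a continuous homomorphism, so the pairs $(\mathbb T^m, \rho_m \circ M)$ exhaust all candidates.

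Next I would characterize density. Since each $\zeta_k$ is continuous, the annihilator of $\psi(\mathbb R^n)$ equals that of its closure $G := \overline{\psi(\mathbb R^n)},$ and by the stated consequence of Pontryagin duality $G = \{g : \zeta_k(g) = 1 \text{ for all } k \in G^{\perp}\}.$ Because a nonzero $k \in \mathbb Z^m$ gives a character $\zeta_k$ that is not identically $1$ on $\mathbb T^m,$ this yields $G = \mathbb T^m$ iff $G^{\perp} = \{0\}.$ I then compute $G^{\perp}$: for $k \in \mathbb Z^m$ one has $\zeta_k(\rho_m(Mx)) = e^{2\pi i\, k^T M x},$ which equals $1$ for every $x \in \mathbb R^n$ iff the linear functional $x \mapsto k^T M x$ is integer-valued on all of $\mathbb R^n$; as a nonzero linear functional on $\mathbb R^n$ is surjective onto $\mathbb R,$ this holds iff $k^T M = 0.$ Hence $G^{\perp} = \{k \in \mathbb Z^m : k^T M = 0\}.$

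Finally I would translate the condition $G^{\perp} = \{0\}.$ The equation $k^T M = 0$ asserts that the integer combination $\sum_{i=1}^m k_i\,(\text{$i$th row of } M)$ vanishes, and clearing denominators shows this has a nonzero integer solution $k$ iff the rows of $M$ are linearly dependent over $\mathbb Q.$ Therefore $\psi(\mathbb R^n)$ is dense iff the rows of $M$ are linearly independent over $\mathbb Q,$ which completes the proof. I expect the only delicate point to be verifying that the purely topological lift $\tilde\psi$ is genuinely a homomorphism; everything after the factorization is a routine duality computation.
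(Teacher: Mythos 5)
Your proof is correct and follows essentially the same route as the paper: lift $\psi$ through the covering $\rho_m$ to get $\psi = \rho_m \circ M,$ then characterize density of the image via the annihilator $\{k \in \mathbb Z^m : M^T k = 0\}$ and Pontryagin duality. The only cosmetic difference is that you verify the lift is a homomorphism by uniqueness of lifts, whereas the paper shows the discrepancy $\eta(x,y) = h(x+y)-h(x)-h(y)$ is a continuous $\mathbb Z^m$--valued function on a connected set vanishing at the origin; these are interchangeable arguments.
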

\begin{proof}
Since $\rho_m : \mathbb R^m \rightarrow \mathbb T^m$ is a covering space, 
$\mathbb R^n$ is path, locally path and simply connected, 
the lifting property \hbox{(\cite{hatcher}, Theorem 1.55)} implies that there exists 
a unique continuous map $h : \mathbb R^n \rightarrow \mathbb R^m$ such that 
$\psi = \rho_m \circ h$ and $h(0) = 0.$ Define 
$\eta : \mathbb R^n \times \mathbb R^n \rightarrow \mathbb R^m$ by $\eta(x,y) = h(x+y)-h(x)-h(y).$ Since $\psi$ and $\rho_m$ are homomorphisms, 
$\rho_m \circ \eta = 1$ hence the image of $\eta$ is a subset of $\mathbb Z^m = \hbox{kernel }\rho_m.$ Since $\eta$ is continuous and its domain is connected, its image is connected. Since $\eta(0,0) = 0,$ $\eta = 0,$ $h$ is a homomorphism so there exists $M \in \mathbb R^{m \times n}$ such that $h = M,$ hence $\psi = \rho_m \circ M.$ Then $\overline {\psi(\mathbb R^n)} \neq \mathbb T^m$ iff there exists $k \in \mathbb Z^m \backslash \{0\}$ with $1 = \zeta_k\circ \psi = \chi_{M^Tk} \Leftrightarrow M^Tk = 0$ iff the rows of $M$ are not linearly independnet over $\mathbb Q.$
\end{proof}
%
%
Define $\Psi : C(\mathbb T^m) \rightarrow C_b(\mathbb R^n)$ by
$\Psi g := g \circ \psi,$ and 
$\Omega_\psi := M^T\, \mathbb Z^m.$
\begin{lem}\label{lem2}
$\Psi T(\mathbb T^m) = \hbox{span }\{\, \chi_x\, :\, x\in \Omega_\psi\, \}$ and
$\Psi$ is an isometry.
$\Psi\, C(\mathbb T^m) = 
\{\, f \in U(\mathbb R^n)\, :\, \Omega(f) \subset \Omega_\psi\, \}.$ For $p \in [1,\infty),$
$\Psi$ extends to a linear isometry $\Psi : L^p(\mathbb T^m) \rightarrow B^p(\mathbb R^n)$ and
$\Psi\, L^p(\mathbb T^m) =
\{\, f \in B^p(\mathbb R^n)\, : \, \Omega(f) \subset \Omega_\psi\, \}.$
\end{lem}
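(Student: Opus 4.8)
The plan is to reduce everything to the single computation of $\Psi$ on the characters $\zeta_k$ and then bootstrap from trigonometric polynomials to all of $C(\mathbb T^m)$ and $L^p(\mathbb T^m)$ by density, the only genuinely analytic inputs being the almost-periodic approximation theorems and the equidistribution of the flow $x(z) = \psi(x)z$. First I would compute, for $k \in \mathbb Z^m$ and $x \in \mathbb R^n$, that $(\Psi \zeta_k)(x) = \zeta_k(\rho_m(Mx)) = e^{2\pi i\, k^T M x} = \chi_{M^T k}(x)$, so that $\Psi \zeta_k = \chi_{M^T k}$. Because Lemma \ref{lem1} makes the rows of $M$ linearly independent over $\mathbb Q$, the map $k \mapsto M^T k$ is injective on $\mathbb Z^m$, so distinct $\zeta_k$ go to distinct $\chi_x$ with $x \in \Omega_\psi = M^T\mathbb Z^m$; linearity then gives $\Psi\, T(\mathbb T^m) = \hbox{span}\{\chi_x : x \in \Omega_\psi\}$. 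The sup-norm isometry is immediate from the defining density of $\psi(\mathbb R^n)$ in $\mathbb T^m$: for $g \in C(\mathbb T^m)$, continuity and density give $\sup_x |g(\psi(x))| = \max_{z \in \mathbb T^m}|g(z)|$, i.e. $\|\Psi g\| = \|g\|$.

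For $\Psi\, C(\mathbb T^m) = \{f \in U(\mathbb R^n): \Omega(f) \subset \Omega_\psi\}$ I would argue by two inclusions. For $\subseteq$, Stone--Weierstrass writes $g \in C(\mathbb T^m)$ as a uniform limit of $p_j \in T(\mathbb T^m)$; the isometry gives $\Psi g = \lim_j \Psi p_j \in U(\mathbb R^n)$, and since each Bohr--Fourier coefficient obeys $|\widehat h(y)| \le \|h\|$ the coefficients pass to the limit, forcing $\widehat{\Psi g}(y) = \lim_j \widehat{\Psi p_j}(y) = 0$ for $y \notin \Omega_\psi$, hence $\Omega(\Psi g) \subset \Omega_\psi$. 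For $\supseteq$, the essential input is the Bohr (Bochner--Fej\'er) approximation theorem: any $f \in U(\mathbb R^n)$ with $\Omega(f) \subset \Omega_\psi$ is a uniform limit of trigonometric polynomials whose frequencies lie in $\Omega(f) \subset \Omega_\psi$, so $f \in \overline{\Psi\, T(\mathbb T^m)}$. Since $\Psi$ is an isometry of the complete space $C(\mathbb T^m)$ its range is closed, whence $\overline{\Psi\, T(\mathbb T^m)} = \Psi\, \overline{T(\mathbb T^m)} = \Psi\, C(\mathbb T^m)$ by Stone--Weierstrass.

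The $L^p$/$B^p$ statements are the core. I would first prove $\|\Psi g\|_{B,p} = \|g\|_{L^p}$ for $g \in C(\mathbb T^m)$ and then extend by density, $C(\mathbb T^m)$ being dense in $L^p(\mathbb T^m)$ and $B^p(\mathbb R^n)$ complete. The norm identity comes from equidistribution: by the character computation, $\frac{1}{V_n(R)}\int_{B(0,R)} \zeta_k\circ\psi = \frac{1}{V_n(R)}\int_{B(0,R)} \chi_{M^T k}$ tends to $1$ if $k = 0$ and to $0$ if $k \neq 0$ (a nontrivial character has ball integral $o(V_n(R))$), matching $\int_{\mathbb T^m}\zeta_k\, dm$; by linearity and the uniform bound the ball averages of $h\circ\psi$ converge to $\int_{\mathbb T^m} h\, dm$ for all $h \in C(\mathbb T^m)$. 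Taking $h = |g|^p$ turns the $\limsup$ in \eqref{Bes} into a limit equal to $\|g\|_{L^p}^p$. The last identity $\Psi\, L^p(\mathbb T^m) = \{f \in B^p(\mathbb R^n): \Omega(f) \subset \Omega_\psi\}$ is then proved just as the uniform case, replacing $\|\cdot\|$ by $\|\cdot\|_{B,p}$: the estimate $|\widehat h(y)| \le \|h\|_{B,1} \le \|h\|_{B,p}$ (from \eqref{hat} and $B^p \subset B^1$) makes the coefficients $B^p$-continuous and gives $\subseteq$, while the Besicovitch approximation theorem — every $f \in B^p$ is $\|\cdot\|_{B,p}$-approximable by trigonometric polynomials with frequencies in $\Omega(f)$ — together with the closed isometric range and $L^p$-density of $T(\mathbb T^m)$ gives $\supseteq$.

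I expect the main obstacle to be the equidistribution step: one must check that the balls $B(0,R)$ form a legitimate F\o lner averaging family for the $\mathbb R^n$-action and that nontrivial characters really integrate to $o(V_n(R))$ over them, so that $\|\Psi g\|_{B,p}$ collapses onto the Haar norm $\|g\|_{L^p}$ with the $\limsup$ upgraded to a true limit. The two approximation theorems (Bohr--Bochner--Fej\'er uniformly, Besicovitch in $B^p$) are standard but are the remaining essential external inputs; everything else is linearity, density, and isometry bookkeeping.
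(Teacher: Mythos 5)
Your proof is correct and follows essentially the same route as the paper: the character identity $\Psi\zeta_k = \chi_{M^Tk}$ gives the first assertion, density of $\psi(\mathbb R^n)$ gives the sup-norm isometry, and the two range identities follow from Stone--Weierstrass, the Bohr/Besicovitch approximation theorems, and the fact that an isometry defined on a complete space has closed range (the paper's explicit pullback of Cauchy sequences through $\Psi$ is the same argument). The only difference is that where the paper compresses the $L^p$ case into ``proved in a similar way,'' you correctly isolate the one step there that is not purely formal --- the equidistribution (unique ergodicity) argument showing $\|\Psi g\|_{B,p} = \|g\|_{L^p}$ for continuous $g$, i.e.\ that ball averages of $h\circ\psi$ converge to the Haar integral of $h$ --- which is exactly what is needed and is a worthwhile addition to the paper's terse treatment.
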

\begin{proof}
The first assertion is obvious, the second follows since $\psi$ has a dense image.
If $g \in C(\mathbb T^m),$ the Stone--Weierstrass theorem implies there exists a sequence 
$P_n \in T(\mathbb T^m)$ with 
$||P_n -g|| \rightarrow 0.$ 
Define 
$Q_n = \Psi P_n$ and 
$f := \Psi g.$ Then $Q_n \in \hbox{span }\{\, \chi_x\, :\, x\in G_\psi\, \}$ and since $\Psi$ is an isometry,
$||Q_n - f|| \rightarrow 0$ so $f \in U(\mathbb R^n)$ and $\Omega(f) \subset G_\psi.$ Conversely, if $f \in U(\mathbb R^n)$ and $\Omega(f) \subset \Omega_\psi$ then there exists $Q_n \in \hbox{span }\{\, \chi_x\, :\, x\in \Omega_\psi\, \}$ with
$||Q_n - f|| \rightarrow 0.$ Then there exists a sequence $P_n \in T(\mathbb T^m)$ with $\Psi P_n = Q_n.$ Since $Q_n$ is a Cauchy sequence and $\Psi$ is a isometry, $P_n$ is a Cauchy sequence in the complete metric space $C(\mathbb T^m)$ so there exists $g \in C(\mathbb T^m)$ such that $||P_n-g|| \rightarrow 0.$ Therefore $f = \Psi g.$
The second equality is proved in a similar way using the $||\, ||_{B,p}$ norm. 
\end{proof}
%
%
\begin{defi}\label{APS2}
A multiset $(\Lambda,c)$ is of toral type if it is Besicovitch almost periodic and $\Omega(\mu)$ generates a rank $m < \infty$ subgroup of $\mathbb R^n.$  
\end{defi}
Lemma \ref{lem2} implies that $(\Lambda,c)$ is of toral type iff there exists a compactification $(\mathbb T^m,\psi)$ with $\mu*C_c(\mathbb R^n) \subset \Psi \, L^p(\mathbb T^m).$ Then define $\Theta : C_c(\mathbb R^n) \rightarrow L^p(\mathbb T^m)$ so
\begin{equation}\label{Theta}
	\Psi \circ \Theta = \mu* : C_c(\mathbb R^n) \rightarrow B^p(\mathbb R^n),
\end{equation}
and observe that $(\Lambda,c)$ is Bohr almost periodic iff $\Theta C_c(\mathbb R^n) \subset C(\mathbb T^m).$
%
%
\begin{lem}\label{lem3} 
If a multiset $(\Lambda,c)$ is Bohr almost periodic of toral type and $\Lambda$ is Delone then for every $\lambda_1 \in \Lambda$ and $\gamma \in$ kernel $\psi,$ there exists
$\lambda_2 \in \Lambda$ with $c(\lambda_2) = c(\lambda_1)$ and 
$\lambda_1 + \gamma = \lambda_2.$
Therefore $r := $rank kernel $\psi \leq n$ and rank $M = n.$ If $r = n$ then $\Lambda$ is trivial.
\end{lem}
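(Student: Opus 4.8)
The plan is to exploit that, under the hypotheses, every convolution $\mu*f$ factors through $\psi$ and is therefore periodic with respect to kernel $\psi$, and then to push this periodicity back from $\mu*f$ onto $\mu$, where it becomes periodicity of the coefficient function $c$. First I would record the factorization: since $(\Lambda,c)$ is Bohr almost periodic of toral type, Definition~\ref{APS2} together with the characterization following (\ref{Theta}) gives $\Theta\,C_c(\mathbb R^n) \subset C(\mathbb T^m)$, so for each $f \in C_c(\mathbb R^n)$ there is some $g \in C(\mathbb T^m)$ with $\mu*f = \Psi g = g \circ \psi$. If $\gamma \in$ kernel $\psi$, then $\psi(x+\gamma) = \psi(x)\psi(\gamma) = \psi(x)$, hence $(\mu*f)(x+\gamma) = g(\psi(x+\gamma)) = g(\psi(x)) = (\mu*f)(x)$ for every $x$; that is, $\mu*f$ is $\gamma$--periodic for every $f \in C_c(\mathbb R^n)$.

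Next I would transfer this periodicity to $\mu$ itself. Writing $\mu_\gamma := \sum_{\lambda \in \Lambda} c(\lambda)\,\delta_{\lambda+\gamma}$ for the translate of $\mu$ by $\gamma$, a one-line computation gives $(\mu_\gamma*f)(x) = (\mu*f)(x-\gamma)$, which by the $\gamma$--periodicity just established equals $(\mu*f)(x)$ for all $x$ and all $f$. Since a Radon measure is determined by its convolutions against $C_c(\mathbb R^n)$ (evaluating at $x=0$ already recovers $\mu(h)$ for every $h \in C_c(\mathbb R^n)$), this forces $\mu_\gamma = \mu$. Matching the coefficients of the atom at $\lambda_1+\gamma$ on the two sides yields $c(\lambda_1+\gamma) = c(\lambda_1)$; for $\lambda_1 \in \Lambda$ this value is positive, so $\lambda_2 := \lambda_1+\gamma$ lies in $\Lambda$ with $c(\lambda_2) = c(\lambda_1)$. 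This is the first assertion, and it shows $\lambda_1 + \hbox{kernel }\psi \subset \Lambda$.

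The remaining claims are then elementary geometry. Because $\Lambda$ is uniformly discrete, distinct points of $\lambda_1 + \hbox{kernel }\psi \subset \Lambda$ are at distance at least $R_1(\Lambda) > 0$, so kernel $\psi$ is uniformly discrete, hence a discrete subgroup of $\mathbb R^n$ of some rank $r \leq n$. Were rank $M < n$, the nonzero subspace kernel $M \subset$ kernel $\psi$ would force kernel $\psi$ to contain a line, contradicting discreteness; hence rank $M = n$. If $r = n$, then $L := \hbox{kernel }\psi$ is a full--rank lattice, $\Lambda$ is $L$--invariant, and $c$ is $L$--periodic; choosing a bounded fundamental domain $D$ for $L$, uniform discreteness makes $\Lambda \cap D = \{\lambda_1,\dots,\lambda_k\}$ finite and $\Lambda = \bigcup_{i=1}^{k}(\lambda_i+L)$ with $c \equiv c_i$ on $\lambda_i+L$, so $(\Lambda,c) = \sum_{i=1}^{k} c_i\,(\lambda_i+L,1)$ is a finite sum of lattice multisets and hence trivial.

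I expect the only genuinely delicate point to be the transfer of periodicity from $\mu*f$ to $\mu$, namely the justification that a Radon measure is determined by its convolutions against $C_c(\mathbb R^n)$; the factorization is immediate from Definition~\ref{APS2}, and the discreteness and triviality steps are the standard facts that a uniformly discrete subgroup of $\mathbb R^n$ is a lattice of rank at most $n$ and that an $L$--periodic uniformly discrete set, with $L$ of full rank, is a finite union of cosets of $L$.
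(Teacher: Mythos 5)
Your proof is correct, and it follows the same skeleton as the paper's --- factor $\mu*f$ through $\psi$ to get invariance under translation by kernel $\psi$, then push that invariance onto the multiset --- but the mechanism at the key step is genuinely different. The paper picks a single tent function $f$ supported in $B(0,R_1(\Lambda)/2)$, so that the translates $c(\lambda)f(\cdot-\lambda)$ have pairwise disjoint supports, and then matches summands on the two sides of the periodicity identity; this is where the Delone (more precisely, uniformly discrete) hypothesis enters the first assertion. You instead quantify over all $f \in C_c(\mathbb R^n)$, observe $(\mu_\gamma*f)(x)=(\mu*f)(x-\gamma)=(\mu*f)(x)$, and invoke the standard fact that a Radon measure is recovered from its convolutions (via $(\mu*f)(0)=\mu(h)$ with $h(x):=f(-x)$), concluding $\mu_\gamma=\mu$ and reading off equality of atoms. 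What your route buys is generality: the first assertion then needs only local finiteness of $(\Lambda,c)$, not uniform discreteness, which is relevant to the paper's Question 1 in Section \ref{sec4} about extending Section \ref{sec3} to locally finite multisets; what the paper's route buys is that it is self-contained at the level of pointwise identities, with no appeal to the measure-theoretic identification $\mu_\gamma=\mu$. Your handling of the remaining claims (discreteness of kernel $\psi$ gives $r\le n$ and rank $M=n$; for $r=n$, a bounded fundamental domain gives finitely many cosets, hence triviality) is also sound, and is in fact slightly more careful than the paper's one-line ``finitely many orbits'' argument, since you make explicit why compactness of the fundamental domain forces finiteness.
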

\begin{proof}
Let $f \in C_c(\mathbb R^n)$ be a nonzero functions with support $f \subset B(0,R_1(\Lambda)/2).$
Then since $\mu*f = (\Theta f)\circ \psi,$ the function $\mu*f$ is invariant under translation by elements in kernel $\psi.$ Therefore for every $\gamma \in \hbox{kernel }\psi,$ 
$$\sum_{\lambda \in \Lambda} c(\lambda)\, f(x+\lambda + \gamma) = \sum_{\lambda \in \Lambda} c(\lambda) \, f(x+\lambda), \ \ \ x \in \mathbb R^n.$$
Since the supports of the summands on both sides of this equation are disjoint, for every $\lambda_1 \in \Lambda$ there exists $\lambda_2 \in \Lambda$ such that
$$
c(\lambda_1)\, f(x+\lambda_1 + \gamma) = c(\lambda_2)\, f(x + \lambda_2), \ \  \ x \in \mathbb R^n.
$$
This proves the first assertion. Since $\Lambda$ is discrete, kernel $\psi$ is discrete hence $r \leq n$ and rank $M = n.$  If $r = n$ then $\Lambda$ consists of orbits in $\Lambda$ of kernel $\psi.$ Since $\Lambda$ is discrete there are a finite number of orbits so $\Lambda$ is trivial.
\end{proof}
%
%
\begin{defi}\label{comlambda}
Let $(\Lambda,c)$ be a Besicovitch almost periodic multiset of toral type with associated compactification $(\mathbb T^m,\psi)$ of $\mathbb R^n.$ 
The compactification of $(\Lambda,c)$ is the pair $(K,\kappa)$ where
\begin{equation}\label{K}
	K := \overline {\psi(\Lambda)}
\end{equation}
is the closure of $\psi(\Lambda),$ and $\kappa : C(\mathbb T^m) \rightarrow \mathbb C$ is the measure defined by 
\begin{equation}\label{kappa}
	\kappa \,  g := \lim_{R \rightarrow \infty} \frac{\sum_{\lambda \in \Lambda \cap B(0,R)} c(\lambda) g(\psi(\lambda))}{V_n(R)}, \ \ \ g \in C(\mathbb T^m).
\end{equation}
\end{defi}
The existence of the limit in Equation \ref{kappa} follows from Equation \ref{hat} with $y = 0$ as follows. Let $h_n \in C_c(\mathbb R^n)$ be an approximate identity sequence
and $\mu = \sum_{\lambda \in \Lambda} c(\lambda)\delta_\lambda$ be the Radon measure associated with the Besicovitch almost periodic multiset $(\Lambda,c).$
Then $\mu*h_n \in B^1(\mathbb R^n)$ and $g \circ \psi \in U(\mathbb R^n)$ so
the product $g_n := (\mu*h_n)(g \circ \psi) \in B^1(\mathbb R^n)$ hence
$\widehat g_n(0)$ exists. Since $g \circ \psi$ is uniformly continuous 
$\lim_{n \rightarrow \infty} \widehat g_n(0) = \kappa g.$ Furthermore 
since $||\mu*h_n||_{B,1}$ are uniformly bounded by some constant $C > 0,$ it follows that $|\widehat g_n(0)| \leq C||g \circ \psi|| = C\max_{z \in \mathbb T^m} |g(z)|$ hence $|\kappa g| \leq C \max_{z \in \mathbb T^m} |g(z)|.$ Therefore
the Riesz-Markov-Kakutani theorem implies that  $\kappa$ is a Borel measure on $\mathbb T^m.$ Clearly $\kappa$ is positive, supported on $K,$ and satisfies
\begin{equation}\label{FTkappa}
\kappa \, \zeta_{-k} = \widehat \mu(M^Tk), \ \ \ k \in \mathbb Z^m.
\end{equation}
Let $\sigma_1$ be Lebesgue measure on $\mathbb R^n$ and $\sigma_2$ be Haar measure on $\mathbb T^m.$ Then $f \in C_c(\mathbb R^n) \implies f\sigma_1$ is a measure on $\mathbb \mathbb R^n$ hence $\psi$ induces the measure $\psi^*(f\sigma_1)$ on $\mathbb T^m$ and
\begin{equation}\label{conv}
	\psi^*(f\sigma_1)*\kappa = (\Theta f)\sigma_2.
\end{equation}
Lemma\ref{lem3} implies that $M^TM \in \mathbb R^{n \times n}$ is symmetrix and positive definite. Therefore there exists $O \in SO(n,\mathbb R)$ with $(MO)^TMO = D^2$ where $D$ is a diagonal matrix with positive diagonal entries, so  $MOD^{-1}$ has orthonormal columns. Henceforth we assume that a change of variables has been made that transforms a multiset $(\Lambda, c)$ into $(DO^T\Lambda,c \circ OD^{-1})$ having the same properties described in Definitions \ref{def2} and \ref{def3}, and transforms $M$ to satisfy $M^TM = I_n$ (the $n$ by $n$ identity matrix). 
Construct $N \in \mathbb R^{m \times (m-n)}$ such that
$[M \, N] \in SO(m,\mathbb R),$ the $(m-n)$--form
\begin{equation}\label{eq:xiN}
\xi_N := (2\pi i)^{-(m-n)} \sum_{1 \leq i_1,...,i_{m-n} \leq m} 
N_{i_{1,1}}\cdots N_{i_{{m-n},m-n}}\, 
\frac{dz_{i_1}}{z_{i_1}} 
\wedge \cdots \wedge 
\frac{dz_{i_{m-n}}}{z_{i_{m-n}}},
\end{equation}
and the $n$--form $\xi_M$ from $M.$ $\xi_M; \xi_N$ is determined by $MM^T;NN^T.$ Furthermore 
$
\xi_M \wedge \xi_N = \hbox{volume form on } \mathbb T^m.
$
%
%
\begin{lem}\label{lem4}
If $K$ is an $(m-n)$-dimensional submanifold of $\mathbb R^m,$ then
$\kappa = |\kappa_c|,$ (the total variation measure), where $\kappa_c$ is the complex Borel measure
\begin{equation}\label{kappaext}
	\kappa_c \, g := \int_{K} g \, \xi_N, \ \ g \in C(\mathbb T^m),
\end{equation}
where the connected components of $K$ are given arbitrary orientations ($\kappa_c$ depends on the orientations but $\kappa$ does not).
\end{lem}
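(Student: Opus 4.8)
The plan is to reduce the statement to a single geometric assertion—that the density measure $\kappa$ coincides with the \emph{unoriented} transverse $(m-n)$-volume that $\xi_N$ induces on $K$—and then to establish that assertion by a flow-box computation driven by unique ergodicity of the $\mathbb{R}^n$-action. First I would analyze $|\kappa_c|$. Because $[M\,N]\in SO(m,\mathbb{R})$, the form $\xi_N$ annihilates every leaf direction (the columns of $M$) and restricts to the transverse volume on the complementary $N$-directions. Since $K$ has dimension $m-n$ and is transverse to the foliation, at each $z\in K$ the tangent space $T_zK$ projects isomorphically onto the $N$-directions, so $\xi_N|_K$ is a nowhere-vanishing top form on $K$. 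Hence on each connected component $\xi_N|_K$ is $\pm$ the transverse-volume form, the sign depending on the chosen orientation; consequently $\kappa_c\,g=\int_K g\,\xi_N$ is a complex measure whose total variation is exactly $|\kappa_c|\,g=\int_K g\,|\xi_N|$, independent of the orientations. This reduces the lemma to proving $\kappa\,g=\int_K g\,|\xi_N|$.

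Next I would set up the flow boxes and exploit the normalization $M^TM=I_n$. Around each $w_0\in K$ use transversality to build $F:V\times T\to\mathbb{T}^m$, $F(x,t):=\rho_m(Mx)\,t$, with $V\subset\mathbb{R}^n$ a small ball and $T:=K\cap U$ a transversal disk. Because the columns of $M$ are orthonormal, the leaf parametrization $x\mapsto\rho_m(Mx)$ is isometric onto the leaf and $\xi_M$ pulls back to Lebesgue measure $dx$; combined with the identity $\xi_M\wedge\xi_N=$ volume form this yields the key factorization $F^*\sigma_2=dx\otimes|\xi_N|_K$, i.e.\ in a flow box Haar measure splits as leaf-Lebesgue times transverse $\xi_N$-volume. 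To compute $\kappa$ I would re-express it, using the convolution identity (\ref{conv}) with an approximate identity $f\to\delta_0$, as the weak-$*$ limit $\kappa=\lim_{f\to\delta_0}(\Theta f)\sigma_2$; as $f$ concentrates, the support of $\Theta f$ collapses onto $K$. Evaluating $\int_U(\Theta f)\,g\,d\sigma_2$ in the coordinates $F$, the factorization lets me integrate out the leaf variable: since $(\Theta f)\circ\psi=\mu*f$ and $\Lambda$ is locally finite, for a small box the inner integral $\int_V(\Theta f)(F(x,t))\,dx$ tends to the bump mass $\int f=1$ carried along the single local sheet of $K$ through $t$, leaving $\int_T g\,|\xi_N|$. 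Summing over a partition of unity subordinate to a finite cover of $K$ by flow boxes then gives $\kappa\,g=\int_K g\,|\xi_N|$.

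The main obstacle is the leaf-direction integration and the interchange of the limit $f\to\delta_0$ with the flow-box integral, performed uniformly over the compact manifold $K$. Two points require care: (i) controlling, via unique ergodicity of the linear $\mathbb{R}^n$-flow on $\mathbb{T}^m$ (the rows of $M$ being $\mathbb{Q}$-independent, per Lemma \ref{lem1}), the passage between the ball-normalized sums defining $\kappa$ and integrals against Haar measure; and (ii) accounting for the multiplicity of $\psi$ over $K$, that is, the contribution of distinct points of $\Lambda$ lying in a common $\ker\psi$-coset and mapping to the same point of $K$, so that the $V_n(R)$-normalization reproduces $|\xi_N|$ with no spurious constant. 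I expect (ii)---verifying that the co-area bookkeeping balances the kernel multiplicity against the Euclidean ball volume---to be the crux; it is immediate in the uniformly discrete case $\ker\psi=\{0\}$ (which already covers the $n=1$ Fourier-quasicrystal application), where $\psi$ is injective on $\Lambda$ and the argument becomes a clean equidistribution of $\psi(\Lambda)$ on $K$ against the transverse volume $|\xi_N|$.
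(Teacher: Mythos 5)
Your proposal is correct and takes essentially the same approach as the paper: the paper's entire proof of Lemma \ref{lem4} is the single sentence ``Follows from Definition \ref{comlambda} and the ergodic action of $\mathbb R^n$ on $\mathbb T^m$,'' and your argument---the convolution identity (\ref{conv}) applied to an approximate identity, unique ergodicity of the linear $\mathbb R^n$-flow, and the flow-box factorization of Haar measure coming from $\xi_M \wedge \xi_N$ being the volume form---is precisely the detailed implementation of that citation. Two minor remarks: you assume transversality of $K$ to the foliation, which the lemma does not state but which is the setting in which the paper actually uses the result (cf.\ Example \ref{ex1}), and your worry (ii) about $\ker\psi$-multiplicity is already absorbed by the convolution identity, since each $\lambda \in \Lambda$ contributes exactly once to $\mu*f$ regardless of how many points of $\Lambda$ share a $\ker\psi$-coset.
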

\begin{proof}
Follows from Definition \ref{comlambda} and the ergodic action of $\mathbb R^n$ on $\mathbb T^m.$
\end{proof}
%
%
For Laurent polynomials $P_j : (\mathbb C \backslash \{0\})^m \rightarrow \mathbb C, \, j = 1,...,q$ define
$$Z_c(P_1,...,P_q) := \left\{ \, z \in (\mathbb C \backslash \{0\})^m : P_j(z) = 0, \ , j = 1,...,q\, \right\},$$
and $Z_r(P_1,...,P_q) = Z_c(P_1,...,P_q) \cap \mathbb T^m.$
\begin{remark}\label{rem4}
If $K = Z_r(P_1,...,P_q)$ then $\widehat \kappa : \mathbb Z^m \rightarrow \mathbb C$ satisfies a system of $q$--linear difference equations having constant coefficients. 
\end{remark}
Let $D := \{z \in \mathbb C:|z| < 1\}.$ A Laurent polynomial is self--dual if
$P(z^{-1})/P(z)$ is a monomial, and stable if $D^m \cap Z(P) = \phi.$ Lee-Yang polynomials are self-dual and stable \cite{leeyang}. We describe three toral compactifications where $n = 1, m = 2,$ $\theta \in [-\pi,\pi],$ $M = [\cos \theta, \sin \theta]^T,$ and $\psi := \rho_2 \circ M.$
\begin{example}\label{ex1}
This example belongs to a class of Fourier quasicrystals consructed by Meyer (\cite{meyer5}, Definition 6) using Ahern measures \cite{ahern}.
Let $P$ be the Lee-Yang polynomial $2z_1z_2 + z_1 + z_2 + 2$ and
$\Lambda = \psi^{-1}(Z_r(P)).$ 
Then
$$K := \overline \Lambda = Z_r(P) = \left\{ \, [z_1, z_2]^T \in \mathbb T^2: z_1 \in \mathbb T, \, z_2 = -\frac{2+z_1}{1+2z_1}\, \right\}$$
is homeomorphic to $\mathbb T,$ and $\Lambda$ is the set of roots of the trigonometric polynomial $T = 2 \chi_{\cos \theta + \sin \theta} + \chi_{\cos \theta} + \chi_{\sin \theta} + 2,$ which is the set of real roots of its entire extension.
The density of complex zeros of its entire extension equals $\max \{\cos \theta + \sin \theta, \cos \theta, \sin \theta, 0\} - \min \{\cos \theta + \sin \theta, \cos \theta, \sin \theta, 0\}.$ This equals $|\cos \theta + \sin \theta|$ if $\cos \theta$ and $\sin \theta$ have the same sign, and equals $\max \{|\cos \theta|, |\sin \theta|\}$ otherwise. The density of $\Lambda$ equals $|\cos \theta + \sin \theta|,$ therefore Theorems \ref{thm1} and \ref{thm2} imply that $\Lambda$ is a quasicrystal iff $\tan \theta \in [0,\infty].$  
We now derive this conclusion by computing the Fourier transform of $\kappa.$ 
Let $z_j = e^{i\theta_j}, j = 1,2$ and orient $Z_r(P)$ by $\theta_1$ increasing from $0$ to $2\pi.$ Then
$\frac{d\theta_2}{d\theta_1} = \frac{-3}{5+4\cos \theta_1}$ so for $\tan \theta \notin [-3,-1/3],$ $Z_r(P)$ is transverse to the foliation, $\xi_N = (-\sin \theta)d\theta_1 + (\cos \theta) d\theta_2,$ so Equation \ref{FTkappa} implies
$$\widehat \mu([k_1, k_2]\, M) = \kappa \, \zeta_{-[k_1, k_2]^T} = -\int_{K} z_1^{-k_1}z_2^{-k_2} \xi_N = 0$$
whenever $k_1k_2 < 0.$ This implies that $\Omega(\Lambda) = $ projection of the support of $\widehat \kappa$ onto the line $M\, \mathbb R$ is discrete whenever $\tan \theta \in (0,\infty) \backslash \mathbb Q,$ and then 
$\Lambda$ is a FQ. Note that $\Lambda$ is trivial iff $\tan \theta \in \mathbb Q \cup \{\infty\}.$ 
$\widehat \kappa$ satisfies the difference equation
\begin{equation}\label{ex1diff}
2\widehat \kappa ([k_1, k_2]^T) + \widehat \kappa ([k_1-1, k_2]^T)  +\widehat \kappa ([k_1, k_2-1]^T) + 2\widehat \kappa ([k_1-1, k_2-1]^T)  = 0.
\end{equation}
\end{example}
%
%
\begin{example}\label{ex2}
This example includes as a special case one constructed by Olevskii and Ulananovskii (\cite{olevskiiulanovskii1}, Example 1) 
Let $P(z_1,z_2) := z_1 - z_1^{-1} - \delta(z_2 - z_2^{-1})$ Note that $Z_r(P)$ has two connected components that approach the circles $z_1 = 1$ and $z_1 = -1$ as $\delta \rightarrow 0.$ Let $\Lambda := \psi^{-1}(Z_r(P)).$
Then $K := \overline \Lambda = Z_r(P),$ and $\Lambda$ 
is the set of roots of the trigonometric polynomial 
$T := \chi_{\cos \theta} - \chi_{-\cos \theta} - \delta(\chi_{sin \theta} - \chi_{-\sin \theta}).$ 
For $|\tan \theta| < 1/\delta,$ the folliation is transverse to $K$ and the density of $\Lambda$ equals $2 |\cos \theta|.$ A more detailed geometric analysis shows that if $|\tan \theta| \geq 1/\delta$ then the density is less than $2|\sin \theta.$ The density of complex zeros of the entire extension $T$ equals 
$2 \, \max \{|\cos \theta|, \, |\sin \theta|.$ Therefore Theorems \ref{thm1} and \ref{thm2} imply that 
$\Lambda$ is a FQ iff $\tan \theta \in (0,1) \backslash \mathbb Q.$ Equation \ref{FTkappa} implies
$$\widehat \mu([k_1, k_2]\, M) = \kappa \, \zeta_{-[k_1, k_2]^T} = -\int_{K} z_1^{-k_1}z_2^{-k_2} \xi_N = 0$$
whenever $k_2 > -k_1 \geq 1$ or $k_2 < -k_1 \leq 0.$ This implies that $\Omega(\Lambda) = $ projection of the support of $\widehat \kappa$ onto the line $M\, \mathbb R$ is discrete whenever $\tan \theta \in (0,1) \backslash \mathbb Q,$ and then 
$\Lambda$ is a FQ.
$\widehat \kappa$ satisfies the difference equation
\begin{equation}\label{ex2diff}
\widehat \kappa ([k_1+, k_2]^T) - \widehat \kappa ([k_1-1, k_2]^T)  
-\delta \widehat \kappa ([k_1, k_2+1]^T) +\delta \widehat \kappa ([k_1, k_2-1]^T)  = 0.
\end{equation}
\end{example}
%
%
\begin{example}\label{ex3}
Let $\tan \theta \notin \mathbb Q \cup \{\infty\}$ and $\ell > 0.$ Then
$$\Lambda := \{m\cos \theta + n\sin \theta: m, n \in \mathbb Z, \,
|m\sin \theta - n\cos \theta| < \ell/2\}$$
is a cut and project set. It is a Meyer subset of $\mathbb R$ and nontrivial so Remark \ref{rem3} implies that it is not Bohr almost periodic.
$K = \rho_2(\{t[-\sin \theta, \cos \theta]^T:t \in (-\ell/2,\ell/2)\})$ and
$$\kappa \, g = \int_{-\ell/2}^{\ell/2} g\circ \rho_2(t[-\sin \theta,\cos \theta]^T)\, dt, \ \ g \in C(\mathbb T^2),$$ 
so Equation \ref{conv} implies that for every $f \in C_c(\mathbb R),$
$\Theta\, f \in L^p(\mathbb T^2)$ so $\mu*f \in B^p(\mathbb R)$ and $\Lambda$ is Besicovitch almost periodic. But $\Theta f$ has discontinuities on leaves of the foliation that intersect the boundary of $K$ so $\mu*f \notin U(\mathbb R).$
$\kappa\, \zeta_k = \hbox{sinc }(\pi \ell [-\sin \theta \cos \theta]k), \, k \in \mathbb Z^2$ hence $\Omega(\mu) = \cos \theta \, \mathbb Z + \sin \theta \, \mathbb Z$ is a dense subset of $\mathbb R$ so $\Lambda$ is not a Fourier quasicrystal. We observe that if $\alpha := \frac{1+\sqrt 5}{2}, \tan \theta = \alpha,$ and $A := [0, 1;1, 1] \in GL(2,\mathbb Z),$ then
\begin{equation}\label{dilate}
\psi(\alpha x) = A\, \psi(x), \ \ x \in \mathbb R,
\end{equation}
therefore $\alpha \, \Lambda \subset \Lambda,$ so $\Lambda$ is quasicrystal which admits a dilation.
\end{example}
%
%
\section{Homotopy Class of $K$ and Density of $\Lambda.$}\label{sec3}
Assume that $\Lambda \subset \mathbb R^n$ is a Delone Bohr almost periodic set of toral type with associated compactification $(\mathbb T^m,\psi)$ of $\mathbb R^n$ and compactification $(K,\kappa)$ of $\Lambda$ ($c = 1$). 
We assume that a change of variables has been made so that $M^TM = I_n$ where $\psi = \rho_m \circ M.$ If $m = n$ then $\Lambda$ is trivial, $K$ is finite, and $\kappa$ is counting measure on $K.$ We henceforth assume that $m > n.$ We will derive a precise description of $K.$
\\ \\
Define $H := \rho_m^{-1}(K),$  and $V := M\mathbb R^n,$ so
$\overline {M\Lambda + \mathbb Z^m} = H$  and
$\overline {V + \mathbb Z^m} = \mathbb R^m.$
Then $J_1 := M(\hbox{kernel }\psi) = V \cap \mathbb Z^m$
is a rank $r$ projective subgroup of $\mathbb Z^m$ so there exists 
a rank $m-r$ subgroup $J_2$ with $J_1 \oplus J_2 = \mathbb Z^m.$ 
Therefore
\begin{equation}\label{decomp}
M\Lambda + \mathbb Z^m = \bigcup_{j \in J_2} (M\Lambda + j), \ \ \ 
V + \mathbb Z^m = \bigcup_{j \in J_2} (V + j)
\end{equation}
where the unions are disjoint. $M\Lambda \subset V$ is Delone since
$R_j(M\Lambda) = R_1(\Lambda), \, j = 1,2 .$ Define $\mu,$ $\Psi$ and $\Theta$ as in Section 2. Let $U$ be the orthognal complement of $V$ so $U \oplus V = \mathbb R^m$ and let
$P : \mathbb R^M \rightarrow U$ and $Q : \mathbb R^m \rightarrow V$ be orthogonal projections. 
\\ \\
Construct $\varphi : V + \mathbb Z^m \rightarrow M\Lambda + \mathbb Z^m$ such that 
\begin{equation}\label{varphi1}
	\varphi(v) \in M\Lambda, \  \  \ v \in V,
\end{equation}
and
\begin{equation}\label{varphi2}
	||v - \varphi(v)|| = \min \{\,  ||x - p|| \, : \, p \in M\Lambda \, \}, \ \ \ v \in V.
\end{equation}
Use the fact that $V + \mathbb Z^m$ is the disjoint union of $V + j, j \in J_2$ to extend
the domain of $\varphi$ from $V$ to $V + \mathbb Z^m$ by defining
\begin{equation}\label{varphi3}
	\varphi(v + j) = \varphi(v) + j, \  \  \ v \in V, \ j \in J_2.
\end{equation}
For $j \in J_2,$ 
$M\Lambda + j \subset V+j$ is Delone so gives a
Vorono\"{i} tesselation \cite{voronoi} of $V+j$ and $\varphi$ maps the open cell containing $p \in M\Lambda + j$ to $p.$
%
%
\begin{lem}\label{lem5}
$j \in J_2,$ $x \in V + j,$ $p \in M\Lambda + j,$ $||x-p|| < \frac{R_1(M\Lambda)}{2} \implies \varphi(x) = p.$
\end{lem}
\begin{proof}
If $q \in M\Lambda + j$ and $||x-q|| < \frac{R_1(M\Lambda)}{2}$ then $||p-q|| < R_1(M\Lambda),$ hence
$p = q$ and $p = \varphi(x).$
\end{proof} 
%
%
Choose $R < R_1(M\Lambda)/2,$ construct $g \in C_c(V)$ by
\begin{equation}\label{g}
	 g(v) := \begin{cases}
	 0, \ \ \ \ \ \ \ \ \ \ \ \ \ \ \ \ \hbox{if } ||v|| \geq R \\
	 1 - R^{-1}\, ||v||, \ \ \hbox{if } ||v|| \leq R,
	 \end{cases}
\end{equation}
and define $f := g \circ M,$ $G := (\Theta f) \circ \rho_m,$ and 
$O := \{\, x \in \mathbb R^m\, :\, G(x) > 0\, \}.$
Clearly $G : \mathbb R^m \rightarrow [0,1]$ is uniformly continuous, 
$G^{-1}(\{1\}) = H,$ and the restriction $G|_V = \widetilde {M\Lambda} g.$
%
%
\begin{lem}\label{lem6} If $x \in O \cap (V + \mathbb Z^m),$ then 
\begin{equation}\label{Gequality}
	||x - \varphi(x)|| =  R\, (1-G(x)).
\end{equation}
\end{lem}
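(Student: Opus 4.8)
The plan is to collapse the whole statement onto the restriction $G|_V = \widetilde{M\Lambda}\, g$ recorded just above the lemma, and then to exploit that the translates of $g$ occurring in this periodization have pairwise disjoint supports. First I would observe that $G = (\Theta f)\circ \rho_m$ is $\mathbb Z^m$--periodic: since $\rho_m$ has kernel $\mathbb Z^m$, we have $G(x+k) = (\Theta f)(\rho_m(x+k)) = (\Theta f)(\rho_m(x)) = G(x)$ for every $k \in \mathbb Z^m$ (continuity of $\Theta f$ is already in hand from Bohr almost periodicity). Because $J_2 \subset \mathbb Z^m$ and $V + \mathbb Z^m = \bigcup_{j \in J_2}(V+j)$ is a disjoint union, any $x \in V + \mathbb Z^m$ is uniquely $x = v + j$ with $v \in V$, $j \in J_2$, and then $G(x) = G(v)$. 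Thus it suffices to understand $G$ on $V$, where $G(v) = (\widetilde{M\Lambda}\,g)(v) = \sum_{p \in M\Lambda} g(v - p)$.

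Next I would use the choice $R < R_1(M\Lambda)/2$. Since $M\Lambda$ is Delone with $R_1(M\Lambda) > 0$, the balls $B(p,R)$, $p \in M\Lambda$, are pairwise disjoint, while $g$ is supported in $B(0,R)$ by \ref{g}. Hence at any $v \in V$ at most one summand $g(v-p)$ is nonzero. If $x = v + j \in O \cap (V + \mathbb Z^m)$, then $0 < G(x) = G(v) = (\widetilde{M\Lambda}\,g)(v)$, so exactly one $p_0 \in M\Lambda$ satisfies $||v - p_0|| < R$, and for that point the explicit form of $g$ gives $G(x) = g(v - p_0) = 1 - R^{-1}\,||v - p_0||$. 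Solving yields $||v - p_0|| = R\,(1 - G(x))$.

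Finally I would identify $\varphi(x)$ with $p_0 + j$. Because $||v - p_0|| < R < R_1(M\Lambda)/2$, Lemma \ref{lem5} applied with $j = 0$ gives $\varphi(v) = p_0$, and the extension rule \ref{varphi3} then gives $\varphi(x) = \varphi(v) + j = p_0 + j$. Therefore $||x - \varphi(x)|| = ||(v+j) - (p_0 + j)|| = ||v - p_0|| = R\,(1 - G(x))$, which is exactly \ref{Gequality}.

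The only real subtlety — the main obstacle — is the bookkeeping in the first step: one must check that restricting $G$ to the nontrivial translates $V + j$ with $j \in J_2$ genuinely reproduces the single-translate picture on $V$, which is precisely where the inclusion $J_2 \subset \mathbb Z^m$ combines with the periodicity of $G$. Once the periodization is seen to collapse to a single nonzero term, the rest is the elementary inversion of the tent function $g$ together with the nearest-point characterization of $\varphi$ from Lemma \ref{lem5}.
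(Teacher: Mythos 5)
Your proof is correct and follows essentially the same route as the paper's: decompose $x = v + j$ with $v \in V$, $j \in J_2$, use the $\mathbb Z^m$--periodicity of $G$ and the restriction $G|_V = \sum_{p \in M\Lambda} g(\cdot - p)$, note that $R < R_1(M\Lambda)/2$ forces at most one nonzero summand, invert the tent function, and identify the surviving point with $\varphi(x)$ via Lemma \ref{lem5} and the extension rule \eqref{varphi3}. Your write-up is somewhat more explicit than the paper's (which leaves the periodicity and the appeal to Lemma \ref{lem5} implicit), but there is no substantive difference.
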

\begin{proof}
Since $x = v +j$ for unique
$v \in V$ and $j \in J_2,$ 
$G(x) = G(v) = \sum_{p \in M\Lambda} g(v - p),$ 
so there exists a unique $p \in M\Lambda$ such that $G(v) = g(v-p).$
Since $g(v-p) > 0,$ $||v-p|| < R$ hence $p = \varphi(v)$
and $G(x) = g(v-p) = 1 - R^{-1}\,||v-p||.$ 
Therefore
$||x - \varphi(x)|| = ||v-\varphi(v)|| = ||v-p|| = R\, (1 - G(x)).$
\end{proof} 
%
%
The modulus of continuity $\omega : (0,\infty) \rightarrow [0,\infty]$ of $G,$ defined by
\begin{equation}\label{omega}
	\omega(t) := \sum \{|G(x)-G(y)|:||x-y|| < t\},
\end{equation}
satisfies $\lim_{\delta \rightarrow 0} \omega(\delta) = 0$ since $G$ is uniformly continuous.
%
%
\begin{lem}\label{lem7} $\varphi : O \cap (V + \mathbb Z^m) \rightarrow M\Lambda + \mathbb Z^m$ is uniformly continuous. 
\end{lem}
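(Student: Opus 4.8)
The plan is to reduce everything to the uniform continuity of the displacement map $d(x) := x - \varphi(x)$. Since $\varphi(x) - \varphi(y) = (x-y) - (d(x)-d(y))$, the triangle inequality gives $\|\varphi(x)-\varphi(y)\| \le \|x-y\| + \|d(x)-d(y)\|$, so it suffices to control $\|d(x)-d(y)\|$ for $x,y \in O \cap (V + \mathbb Z^m)$ with $\|x-y\|$ small. The quantitative inputs will be Lemma \ref{lem6}, which gives $\|d(x)\| = R(1-G(x))$, together with the two structural facts about $G = (\Theta f)\circ\rho_m$: it is $\mathbb Z^m$--periodic (since $\rho_m(x+k)=\rho_m(x)$ for $k\in\mathbb Z^m$) and uniformly continuous with modulus $\omega$.

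First I would write $x = v_1 + j_1$ and $y = v_2 + j_2$ with $v_1,v_2 \in V$ and $j_1,j_2 \in J_2$; this decomposition is unique because by Equation \ref{decomp} the planes $V+j$, $j \in J_2$, are pairwise disjoint. Setting $w := j_1 - j_2 \in \mathbb Z^m$ one has the identity $v_1 + w = v_2 + (x-y)$. The $\mathbb Z^m$--periodicity of $G$ then transports the local profile of $G$ near $v_1$ onto the profile near $v_2$: for every $t$ one has $G(v_1+t) = G(v_1+t+w) = G\big(v_2 + (x-y) + t\big)$, whence $|G(v_1+t) - G(v_2+t)| \le \omega(\|x-y\|)$ uniformly in $t$.

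The decisive step is to evaluate this at $t = -d(x)$. Because $v_1 - d(x) = \varphi(x) - j_1 \in M\Lambda \subset H$, we have $G(v_1 - d(x)) = 1$, and the previous estimate forces $G(v_2 - d(x)) \ge 1 - \omega(\|x-y\|)$. Now $v_2 - d(x) \in V$, so for $\|x-y\|$ small it lies in $O$ and Lemma \ref{lem6} bounds its distance to the nearest point of $M\Lambda$ by $R\,\omega(\|x-y\|)$; moreover that nearest point $c$ satisfies $\|v_2 - c\| \le \|d(x)\| + R\,\omega(\|x-y\|) < R + R\,\omega(\|x-y\|) < R_1(M\Lambda)/2$ (recall $R < R_1(M\Lambda)/2$), so Lemma \ref{lem5} identifies $c = \varphi(v_2) = \varphi(y) - j_2$. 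Unwinding, $(v_2 - d(x)) - \varphi(v_2) = d(y) - d(x)$, hence $\|d(x)-d(y)\| \le R\,\omega(\|x-y\|)$ and therefore $\|\varphi(x)-\varphi(y)\| \le \|x-y\| + R\,\omega(\|x-y\|)$, which tends to $0$; this yields uniform continuity.

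The main obstacle is precisely the cross--plane case $j_1 \ne j_2$: since $V + \mathbb Z^m$ is dense in $\mathbb R^m$, the parallel planes $V+j$ come arbitrarily close, so two nearby points may sit in different planes whose nearest cells $\varphi(x),\varphi(y)$ have a priori no relation, and the magnitude control $\|d(x)\|=R(1-G(x))$ alone does not pin down the \emph{direction} of the displacement. What rescues the argument is that the two local patterns are genuine $\mathbb Z^m$--translates of one another, so their discrepancy is measured by the single scalar $\omega$; the one ingredient that makes this work is the \emph{continuity} of $G$, i.e. that $(\Lambda,c)$ is Bohr rather than merely Besicovitch almost periodic. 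Example \ref{ex3} shows that when $G$ is only an $L^p$ function the map $\varphi$ genuinely fails to be uniformly continuous, so no purely metric argument circumventing this hypothesis can succeed.
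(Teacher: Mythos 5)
Your proof is correct and follows essentially the same route as the paper: both arguments transport $\varphi(x)$ into the plane of $y$ (you via the integer shift $w = j_1 - j_2$ and the $\mathbb Z^m$--periodicity of $G$, the paper via the orthogonal offset $\Delta \in U$ between the planes $V+j_1$ and $V+j_2$), then use Lemma \ref{lem6} to show the transported point has $G$--value at least $1-\omega(\|x-y\|)$ and hence lies within $R\,\omega(\|x-y\|)$ of a point of $M\Lambda + \mathbb Z^m$, and finally invoke Lemma \ref{lem5} to identify that point with (a translate of) $\varphi(y)$. You even land on the paper's exact modulus estimate $\|\varphi(x)-\varphi(y)\| \le \|x-y\| + R\,\omega(\|x-y\|)$, so the difference is purely bookkeeping (displacement vectors in the base plane $V$ versus the paper's double application $\varphi(\varphi(x_1)+\Delta)$).
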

\begin{proof} 
For $i = 1, 2$ assume that $x_i = v_i + j_i \in O$ and where $v_i \in V$ and $j_i \in J_2$
and let $\Delta \in U$ satisfy $V+j_1 + \Delta = V + j_2.$ 
Since $||\Delta|| \leq ||x_2-x_1||$ and $||x_2-x_1-\Delta|| \leq ||x_2-x_1||,$  Lemma 5 implies that 
$$||x_2 - (\varphi(x_1) + \Delta)|| \leq ||x_1-\varphi(x_1)|| + ||x_2-x_1-\Delta|| \leq R\, (1-G(x_1)) + ||x_2-x_1||.$$
If $\omega(||_2-x_1||) < 1,$ then $||\varphi(x_1) + \Delta - \varphi(\varphi(x_1)+\Delta)|| \leq R\, \omega(||x_2-x_1||),$ therefore
$$||x_2 - \varphi(\varphi(x_1) + \Delta)|| \leq  R\, (1-G(x_1)) + ||x_2-x_1|| + R\, \omega(||\Delta||).$$
Lemma 4 implies that if 
$||x_2-x_1|| + \omega(||x_2-x_1||) \leq R_1(M\Lambda)/2 - R$ then
$$\varphi(\varphi(x_1)+\Delta) = \varphi(x_2),$$
hence
$$||\varphi(x_2) - \varphi(x_1)|| < R\, \omega(||x_2-x_1||) + ||x_2-x_1||,$$
which concludes the proof.
\end{proof}
%
%
%
\begin{lem}\label{le8}
$\varphi : O \rightarrow M\Lambda + \mathbb Z^m$ extends to a continuous function $\Phi : \overline O \rightarrow H.$ For every $U_v \subset U$ with $v+U_v \subset O$ there exists a continuous 
$h_v : U_v \rightarrow V$ such that
\begin{equation}\label{Uv}
\Phi(v + u) = h_v(u) + u, \ \ \ u \in U_v.
\end{equation}
\end{lem}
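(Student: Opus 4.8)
The plan is to obtain $\Phi$ as the unique uniformly continuous extension of $\varphi$ from the dense subset $O \cap (V + \mathbb Z^m)$ to the closure $\overline O$, and then to read off the graph representation in the second assertion from the fact that $\varphi$ preserves the $U$--component of its argument.

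First I would record that $O \cap (V + \mathbb Z^m)$ is dense in $\overline O$: the set $V + \mathbb Z^m$ is dense in $\mathbb R^m$, $O$ is open, and for any $y \in \overline O$ and $\epsilon > 0$ the open set $B(y,\epsilon) \cap O$ is nonempty, hence meets $V + \mathbb Z^m$. Lemma \ref{lem7} gives that $\varphi$ is uniformly continuous on $O \cap (V + \mathbb Z^m)$, and $\overline O$ is a closed, hence complete, subset of the complete space $\mathbb R^m$. The standard extension theorem for uniformly continuous maps into a complete space therefore produces a unique continuous $\Phi : \overline O \to \mathbb R^m$ agreeing with $\varphi$ on $O \cap (V + \mathbb Z^m)$. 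Since the values of $\varphi$ lie in $M\Lambda + \mathbb Z^m$ and $H = \overline{M\Lambda + \mathbb Z^m}$ is closed, every limit of such values lies in $H$, so $\Phi(\overline O) \subseteq H$, giving the first assertion.

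For the second assertion the key observation is that $\varphi$ is $P$--preserving on its domain. Writing $x \in O \cap (V + \mathbb Z^m)$ as $x = w + j$ with $w \in V$ and $j \in J_2$, Equation \ref{varphi3} gives $\varphi(x) = \varphi(w) + j$ with $\varphi(w) \in M\Lambda \subset V$; applying $P$ and using $P|_V = 0$ yields $P(\varphi(x)) = P(j) = P(x)$. Thus $P \circ \varphi = P$ on the dense set $O \cap (V + \mathbb Z^m)$, and by continuity $P \circ \Phi = P$ on all of $\overline O$. Now fix $v \in V$ and $U_v \subseteq U$ with $v + U_v \subset O$. For $u \in U_v$ I decompose $\Phi(v+u) = Q(\Phi(v+u)) + P(\Phi(v+u))$; since $v \in V$ and $u \in U$ we have $P(v+u) = u$, so $P(\Phi(v+u)) = u$. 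Setting $h_v(u) := Q(\Phi(v+u)) \in V$ gives $\Phi(v+u) = h_v(u) + u$, and $h_v = Q \circ \Phi \circ (v + \cdot)$ is continuous as a composition of continuous maps.

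No step here is genuinely deep: the analytic content is carried entirely by the uniform continuity established in Lemma \ref{lem7}, so the main thing to be careful about is bookkeeping --- verifying density in $\overline O$ rather than merely in $O$, checking that the extension lands in the closed set $H$ and not only in $\mathbb R^m$, and confirming that it is $v \in V$ that makes $P(v+u) = u$ and hence $h_v(u) \in V$. The one conceptual point worth isolating is the $P$--preservation identity, where the orthogonal decomposition $\mathbb R^m = U \oplus V$ and the defining property $\varphi(v) \in M\Lambda \subset V$ combine to force $\Phi$ to be a graph over $U$, i.e.\ transverse to the foliation.
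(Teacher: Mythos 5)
Your proposal is correct, and its first half---extending $\varphi$ from the dense subset $O \cap (V+\mathbb Z^m)$ of $\overline O$ by uniform continuity (Lemma \ref{lem7}) plus completeness, then noting the values stay in the closed set $H = \overline{M\Lambda + \mathbb Z^m}$---is exactly the paper's argument, stated with more care (the paper asserts the extension exists but never explicitly checks that its image lies in $H$, nor that $O\cap(V+\mathbb Z^m)$ is dense in $\overline O$ rather than just in $O$). Where you genuinely diverge is the proof that $h_v$ takes values in $V$. The paper argues pointwise: for each fixed $u \in U_v$ it invokes density of $PJ_2$ in $U$ to pick a sequence $j_n \in J_2$ with $Pj_n \rightarrow u$, expands $\varphi(v + Pj_n) = \varphi(v - Qj_n) + Qj_n + Pj_n$ via Equation \ref{varphi3}, and concludes $\Phi(v+u) - u \in V$ because $V$ is closed. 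You instead isolate the single identity $P \circ \varphi = P$ on $O \cap (V + \mathbb Z^m)$---which encapsulates the same computation, done once---and extend it to $P \circ \Phi = P$ on $\overline O$ using only continuity of the linear projection $P$ and the density you already established for the first assertion; the graph representation \ref{Uv} with $h_v(u) = Q\Phi(v+u)$ then drops out algebraically. Your route buys three things: it avoids a second density argument (density of $PJ_2$ in $U$) and the per-point sequence selection, it makes continuity of $h_v$ immediate as a composition of continuous maps rather than something to be checked, and it surfaces the implicit hypothesis $v \in V$---which the paper uses silently (e.g.\ when forming $\varphi(v - Qj_n)$, legitimate only if $v - Qj_n \in V$) and without which the graph formula would be false. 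The two proofs rest on the same structural facts (Equations \ref{varphi1} and \ref{varphi3}, the orthogonal splitting $\mathbb R^m = U \oplus V$), so this is a reorganization rather than a new idea, but it is the tighter write-up.
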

\begin{proof}
The first assertion follows since $\varphi$ is uniformly continuous on $O \cap (V+\mathbb Z^m)$ which is dense in $\overline O.$ Assume that $U_v \subset U$ satisfies $v+U_v \subset O$ and define $h_v(u) := \Phi(v+u)-u$ for $u \in U_v$ so Equation \ref{Uv} holds. To prove
$h_v(U_v) \subset V,$ let $u \in U_v$ and use the fact that $PJ_2$ is dense in $U$ choose a sequence $j_n \in J_2$ with $v+j_n \in O$ and
$\lim_{n \rightarrow \infty} Pj_n = u.$ Then
\begin{equation}
\Phi(v+u) = \lim_{n \rightarrow \infty} \varphi(v + Pj_n) =
\lim_{n  \rightarrow \infty} (\varphi(v-Qj_n) + Qj_n) + u.
\end{equation}
Since $\varphi(v-Qj_n) + Qj_n \in V$ and $V$ is closed, 
$h_v(u) = \lim_{n  \rightarrow \infty} (\varphi(v-Qj_n) + Qj_n) \in V.$
\end{proof} 
%
%
Let $K_c$ be a connected component of $K,$ $H_c$ be a connected component of $\rho_m^{-1}(K_c),$ and $\Lambda_c = \psi^{-1}(K_c).$
%
%
\begin{theo}\label{thm3}
There exists a continuous $h_1 : U \rightarrow V$ such that the map
$u \rightarrow h(u)+u$ is a homeomorphism of $U$ onto $H_c.$ 
\end{theo}
\begin{proof}
Clearly $H_c \subset O.$ Choose $u_1 \in U$ and $v_1 \in V$ such that
$u_1+ v_1 \in H_c.$ Choose an open bounded $U_1 \subset U$ containing $u_1$
such that $v_1 + U_1 \subset O$ and $\Phi(v_1 + U_1) \subset H_c.$ This is possible since $H_c$ is a connected component of $H.$
Then define $h_1 : \overline {U_1} \rightarrow V \cap H_c$ by 
\begin{equation}
		h_1(u) := \Phi(v_1+u) - u, \ \ \ u \in \overline {U_1}.
\end{equation}
Lemma \ref{lem7} implies that $h_1(\overline {U_1}) \subset V$ 
so the map $u \rightarrow h(u) + u$ is one-to-one and continuous  
from the compact set $\overline {U_1}$ onto its image hence is a homeomorphism. Since the point $u_1 + v_1$ was an arbitrary, $H_c$ is a manifold. The function $h_1$ can be extended to give a homeomorphism
$h_1 : U \rightarrow H_c$ to conclude the proof.
\end{proof} 
%
%
The inclusion map $i : K_c \rightarrow \mathbb T^m$ induces a map on the fundamental groups $\pi_1(i) : \pi_1(K_c) \rightarrow \pi_1(\mathbb T^m) = \mathbb Z^m.$
%
%
\begin{theo}\label{thm4}
$K_c$ is homeomorphic to $\mathbb T^{m-n}$ and $\pi_1(i)$ is injective.
\end{theo}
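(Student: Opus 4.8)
The plan is to realize $K_c$ as a quotient of its universal cover by a subgroup of $\mathbb{Z}^m$, and to read off both its homeomorphism type and the map on $\pi_1$ from that quotient description. Since $K$ is closed, $H := \rho_m^{-1}(K)$ is a closed, $\mathbb{Z}^m$-invariant set, so $\rho_m$ restricts to a covering map $H \to K$. By Theorem \ref{thm3} the component $H_c$ is homeomorphic to $U \cong \mathbb{R}^{m-n}$, hence simply connected. Setting $\Gamma := \{k \in \mathbb{Z}^m : H_c + k = H_c\}$, I would first check that whenever $x, y \in H_c$ satisfy $\rho_m(x) = \rho_m(y)$ one has $y - x \in \Gamma$: indeed $y - x \in \mathbb{Z}^m$ carries $H_c$ onto the component containing $y$, which is $H_c$. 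Therefore $\rho_m|_{H_c} : H_c \to K_c$ is a covering map with deck group $\Gamma$, and since $H_c$ is simply connected it is the universal cover, giving $\pi_1(K_c) \cong \Gamma$.

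Next I would transport the $\Gamma$-action to $U$ through the homeomorphism $u \mapsto h_1(u) + u$ of Theorem \ref{thm3}. Writing $k = Pk + Qk$, translation by $k \in \Gamma$ sends $h_1(u) + u$ to $h_1(u) + Qk + (u + Pk)$, so membership in $H_c$ forces the quasi-periodicity relation $h_1(u + Pk) = h_1(u) + Qk$ for all $u \in U$; in the $U$-coordinate $k$ therefore acts simply as $u \mapsto u + Pk$. The projection $P : \Gamma \to U$ is injective: if $Pk = 0$ then $k \in V \cap \mathbb{Z}^m = J_1$, so $Qk = k$ and the quasi-periodicity relation collapses to $k = 0$. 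Thus $\Gamma \cong P\Gamma$ is a discrete (by proper discontinuity of the deck action) free abelian subgroup of $U$ acting on $U$ by translations.

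To pin down the rank I would use compactness. By Theorem \ref{thm3} the set $K$ is a manifold, so its component $K_c$ is open and closed in the compact space $\mathbb{T}^m$ and hence compact. Since $U / P\Gamma \cong H_c / \Gamma = K_c$ is compact and $P\Gamma$ is discrete, $P\Gamma$ must be a lattice of full rank $m - n$ in $U$; consequently $\Gamma \cong \mathbb{Z}^{m-n}$ and $K_c \cong U / P\Gamma \cong \mathbb{T}^{m-n}$, proving the first assertion.

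For injectivity of $\pi_1(i)$, I would identify $\pi_1(\mathbb{T}^m) = \mathbb{Z}^m$ via the universal cover $\rho_m : \mathbb{R}^m \to \mathbb{T}^m$. A loop in $K_c$ lifts through $\rho_m|_{H_c}$ to a path in $H_c \subset \mathbb{R}^m$ from a basepoint $x_0$ to $x_0 + k$ with $k \in \Gamma$, and the identification $\pi_1(K_c) \cong \Gamma$ assigns to the loop the element $k$. That same lifted path, now read in $\mathbb{R}^m$, shows the image of the loop under $i$ represents the same $k \in \mathbb{Z}^m$. Hence $\pi_1(i) : \Gamma \hookrightarrow \mathbb{Z}^m$ is the inclusion, which is injective. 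The main obstacle is the covering-space bookkeeping behind the third paragraph: one must verify carefully that $K$, and therefore $K_c$, inherits a manifold structure so that $K_c$ is compact, and that the deck action is genuinely properly discontinuous, since the conclusion that $P\Gamma$ is a full lattice — rather than a dense or lower-rank subgroup of $U$ — rests entirely on cocompactness together with discreteness.
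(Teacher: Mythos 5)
Your proof is correct and follows essentially the same route as the paper: both identify the deck group $S = \Gamma = \{k \in \mathbb Z^m : H_c + k = H_c\}$ of the covering $\rho_m|_{H_c} : H_c \to K_c$, use Theorem \ref{thm3} to see that $H_c \cong U$ is simply connected so that $\pi_1(K_c) \cong \Gamma$ with $\pi_1(i)$ the inclusion, and use compactness of $K_c$ to force full rank and conclude $K_c \cong \mathbb T^{m-n}$. Your write-up is in fact more careful than the paper's own proof (the transport of the action to translations $u \mapsto u + Pk$ on $U$, via the relation $h_1(u+Ps) = h_1(u) + Qs$, appears in the paper only in the discussion following the theorem); the one slip of wording --- $K_c$ is open and closed in $K$, not in $\mathbb T^m$ --- is harmless, since closedness of $K_c$ in the compact set $K$ already gives compactness.
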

\begin{proof}
Define $S := \{ \, k \in \mathbb Z^m \, : \, H_c+k = H_c \, \}.$ Then $S$ is a subgroup of $\mathbb Z^m$ and it acts freely on $H_c.$ 
If $x, y \in H_c$ then $\rho_m(x) = \rho_m(y)$ iff there exists $k \in \mathbb Z^m$ such that $y = x+k.$ Since $H_c$ is connected, $k \in S.$ Therefore $\pi_1(K_c) = S,$ $\pi_1(i)$ is the inclusion map hence is injective. $K_c$ is homeomorphic to the space  of orbits of the action of $S$ on $H_c.$ Since $H_c$ is homeomorphic to $U,$ it has dimension $m-n.$ Since $K_c$ is compact and rank $S = m-n,$ $K_c$ is homeomorphic to $\mathbb T^{m-n}.$
\end{proof} 
%
The set $H_c$ is the graph of the function $h_1 : U \rightarrow V$ and for $s \in S$ the action of $s$ takes the point $h_1(u) + u \in H_c$ to 
$$(h_1(u) + Qs) + (u+Ps)$$ hence for this point to be in $H_c$ it follows that
$$h_1(u)+Qs = h_1(u+Ps).$$ Since the space of orbits in $H_c$ under the action of $S$ is homeomorphic to the compact set $K_c,$ the set $PS$ is a rank $(m-n)$ subgroup of $U.$ Let $W_S$ be the $(m-n)$--dimensional subspace of $\mathbb R^m$ spanned by elements in $S.$ 
%
%
\begin{lem}\label{lem9}
There exists a linear map $h_0 : U \rightarrow V$ such that the
map $u \rightarrow u + h_0(u)$ is a bijection of $U$ onto $W_S.$
\end{lem}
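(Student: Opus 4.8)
The plan is to produce $h_0$ explicitly as the composite of the orthogonal projection $Q$ with the inverse of $P$ restricted to $W_S$, so that the graph of $h_0$ is exactly $W_S$. First I would show that the restricted projection $P|_{W_S} : W_S \to U$ is a linear isomorphism. By construction $W_S$ has dimension $m-n$, and $U$ also has dimension $m-n$. The discussion preceding the lemma records that $PS$ is a rank $(m-n)$ subgroup of the $(m-n)$--dimensional space $U$, so the real span of $PS$ is all of $U$; since $PS \subset P(W_S)$ this forces $P(W_S) = U$. A surjective linear map between spaces of equal finite dimension is a bijection, so $P|_{W_S}$ is an isomorphism.

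Next I would set $\pi := (P|_{W_S})^{-1} : U \to W_S$ and define $h_0 := Q \circ \pi : U \to V$. This is linear as a composite of linear maps, and its values lie in $V$ because $Q$ is the orthogonal projection onto $V$. For $u \in U$, the orthogonal splitting $\mathbb{R}^m = U \oplus V$ gives $\pi(u) = P\pi(u) + Q\pi(u)$, and $P\pi(u) = u$ since $\pi$ inverts $P|_{W_S}$; hence $u + h_0(u) = P\pi(u) + Q\pi(u) = \pi(u)$. Thus the map $u \mapsto u + h_0(u)$ coincides with the isomorphism $\pi$, and is therefore a bijection of $U$ onto $W_S$, which is the assertion.

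The entire content sits in the first step: that $P$ does not collapse $W_S$. Everything afterward is bookkeeping with the orthogonal decomposition. The one point that must be checked is injectivity of $P|_{W_S}$, equivalently $W_S \cap V = \{0\}$, i.e. that $W_S$ is transverse to the foliation directions spanned by $V$. This is precisely what the rank condition encodes: if some nonzero $w \in W_S$ satisfied $Pw = 0$, then $w \in V$ and $\dim P(W_S) < m-n$, contradicting that $PS$ already has full rank $m-n$ inside $U$. So the main obstacle is simply to extract this transversality from the rank statement, after which the dimension count closes the argument.
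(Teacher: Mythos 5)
Your proof is correct and is essentially the paper's argument in coordinate-free form: the paper chooses a basis $s_1,\dots,s_{m-n}\in S$ of $W_S$, notes that $Ps_1,\dots,Ps_{m-n}$ is a basis of $U$ (the same rank-$(m-n)$ fact about $PS$ you invoke), and defines $h_0(\sum c_iPs_i):=\sum c_iQs_i$, which is exactly your $h_0 = Q\circ (P|_{W_S})^{-1}$. No gap; the two proofs coincide in substance.
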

\begin{proof}
Choose a basis $s_1,...,s_{m-n} \in S$ for $W_S.$ Then $Ps_1,...,Ps_{m-n}$ is a basis for $U.$ Every $u \in U$ has a unique representation as
$c_1Ps_1+\cdots+c_{m-n}Ps_{m-n}.$ Then define 
$h_0(u) := c_1Qs_1+\cdots+c_{m-n}Qs_{m-n}.$ Then
$u+h_0(u) = c_1s_1+\cdots+c_{m-n}s_{m-n} \in W_S$
and the map $u \rightarrow u + h_0(u)$ is bijective.
\end{proof}
%
%
We observe that the smallest projective subgroup of $\mathbb Z^m$ containing $S$ is $S_1 = W_s \cap \mathbb R^m.$ Then $G := \rho_m(W_S)$ is a connected compact subgroup of $\mathbb T^m$ of dimension $m-n$ hence isomorphic to $\mathbb T^{m-n}$ and $G^\perp = S_2$ where $S_1 \oplus S_2 = \mathbb Z^m.$ The quotient group $S_1/S$ is finite and we denote its cardinality by $|S_1/S|.$ Define $\Lambda_c := \psi^{-1}(K_c)$ and the multiset $(\Lambda_0,c)$ where $\Lambda_0 := \psi^{-1}(G)$ and $c = |S_1/S|.$ Define the family of maps $F_t : U \rightarrow \mathbb R^m, \ t \in [0,1]$ by
$F_t(u) := u + (1-t)h_0(u) + th_1(u)$ and the family of subsets 
$\Lambda_t := \psi^{-1}(\rho_m(F_t(U))), \ \ t \in [0,1].$
Each $F_t$ is a continuous injection and the family of these functions is a homotopy from $W_S$ to $H_c.$ 
Therefore the family of $\rho_m \circ F_t$ gives a homotopy of the map $\rho_m : W_S \rightarrow \mathbb T^m$ to the map $\rho_m : H_c \rightarrow \mathbb T^m$ and the map $t \rightarrow \Lambda_t$ is continuous with respect to the Haussdorf metric on closed subsets of $\mathbb R^n.$ However, each fiber of the map 
$\rho_m \circ F_0 : W_S \rightarrow G$ is the orbit of the action of $S_1$ by addition, but each fiber of the map $\rho_m \circ F_1 : H_c \rightarrow K_c$ is the orbit of the action of the possibly smaller subgroup $S \subset S_1.$ Therefore the family of sets images $\rho_m \circ F_t(U)$ gives a $|S_1/S|$-to-one deformation of $K_c$ to $G.$ This argument proves that the density of $\Lambda_c$ equals the density of the multiset $(\Lambda_0,c)$ which equals $|S_1/S|$ times the density of $\Lambda_0.$ 
\begin{theo}\label{thm5}
The density of $\Lambda_c$ equals  
$|S_1/S|\, |\det E^TM|$ where $E \in \mathbb Z^{m \times n}$ satisfies $S_2 = E\, \mathbb Z^n.$
\end{theo}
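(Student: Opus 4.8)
The plan is to invoke the deformation argument immediately preceding the statement, which has already reduced the claim to computing the density of the single lattice $\Lambda_0 = \psi^{-1}(G)$: that argument shows the density of $\Lambda_c$ equals $|S_1/S|$ times the density of $\Lambda_0$, so it remains only to prove that the density of $\Lambda_0$ is $|\det E^TM|$. First I would describe $\Lambda_0$ explicitly. Since $G = \rho_m(W_S)$ has annihilator $G^\perp = S_2 = E\,\mathbb Z^n$, Pontryagin duality gives $G = \{z \in \mathbb T^m : \zeta_k(z) = 1 \hbox{ for all } k \in S_2\}$. For $x \in \mathbb R^n$ and $j \in \mathbb Z^n$ one has $\zeta_{Ej}(\psi(x)) = \zeta_{Ej}(\rho_m(Mx)) = \chi_{M^TEj}(x)$, so $\psi(x) \in G$ iff $j^T(E^TM)x \in \mathbb Z$ for every $j \in \mathbb Z^n$, that is, iff $E^TMx \in \mathbb Z^n$. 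Hence $\Lambda_0 = \{x \in \mathbb R^n : E^TMx \in \mathbb Z^n\}$.

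The key step, and the one I expect to be the main obstacle, is to prove that $A := E^TM \in \mathbb R^{n\times n}$ is invertible, since only then is $\Lambda_0$ a full rank lattice. I would deduce this from transversality. From $G^\perp = S_2$ and $G = \rho_m(W_S)$: a vector $k \in \mathbb Z^m$ lies in $G^\perp$ iff $k^Tw \in \mathbb Z$ for every $w \in W_S$, and since $W_S$ is a linear subspace this forces $k^Tw = 0$ for all such $w$; thus $S_2 = \mathbb Z^m \cap W_S^\perp$. As $S_2$ has rank $n = \dim W_S^\perp$, its generators span $W_S^\perp$, so the columns of $E$ span $W_S^\perp$. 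Now if $Ay = 0$ for some $y \in \mathbb R^n$, then $My$ is orthogonal to every column of $E$, hence $My \in W_S$; but $My \in V = M\mathbb R^n$, and Lemma \ref{lem9} realizes $W_S$ as the graph of a linear map $h_0 : U \rightarrow V$, whence $W_S \cap V = \{0\}$ because $U \cap V = \{0\}$. Therefore $My = 0$, and since $\hbox{rank } M = n$ by Lemma \ref{lem3}, we obtain $y = 0$. Thus $A$ is invertible.

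Finally, with $A$ invertible, $\Lambda_0 = A^{-1}\mathbb Z^n$ is a lattice of covolume $|\det A^{-1}| = |\det E^TM|^{-1}$. Counting its points in $B(0,R)$ against the ball volume $V_n(R)$ gives density $\lim_{R \rightarrow \infty} |\Lambda_0 \cap B(0,R)|/V_n(R) = |\det E^TM|$. Multiplying by the factor $|S_1/S|$ produced by the deformation argument yields that the density of $\Lambda_c$ equals $|S_1/S|\,|\det E^TM|$, as claimed. The only delicate point in this scheme is the invertibility of $E^TM$; everything else is the explicit annihilator computation of $\Lambda_0$ and the standard count of lattice points, both routine once the transversality $W_S \cap V = \{0\}$ is in hand.
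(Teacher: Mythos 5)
Your proposal is correct and follows essentially the same route as the paper's proof: both invoke the deformation argument preceding the statement to reduce the claim to the density of $\Lambda_0$, both identify $\Lambda_0 = \{x \in \mathbb R^n : E^TMx \in \mathbb Z^n\}$ via the annihilator computation $G^\perp = S_2 = E\,\mathbb Z^n$, and both conclude by the standard lattice-point count. The only difference is that you explicitly verify that $E^TM$ is nonsingular (through $S_2 = \mathbb Z^m \cap W_S^\perp$, the transversality $W_S \cap V = \{0\}$ from Lemma \ref{lem9}, and rank $M = n$), a step the paper silently assumes when it asserts density $\Lambda_0 = |\det E^TM|$; this is a useful supplement rather than a divergence.
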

\begin{proof} Itt suffices to prove that the density of $\Lambda_0$ equals
$|\det E^TM|.$ Since $G = \{z \in \mathbb T^m:\zeta_k(z) = 1 \hbox{ for all } k \in G^\perp\}$ and $G^\perp = S_2 = E\mathbb Z^n$ it follows that
$\Lambda_0 := \psi^{-1}(G) = 
\{ \lambda \in \mathbb R^n : \zeta_k \circ \rho_m (M\lambda) = 1 \hbox{ for all }
k \in E\mathbb Z^n \}$ \\ $= \{ \lambda \in \mathbb R^n : E^TM\lambda \in \mathbb Z^n \}.$ Therefore density $\Lambda_0 = |\det E^TM|.$
\end{proof}
\begin{remark}\label{rem3}
If $X$ and $Y$ are topological spaces $[X,Y]$ is the set of homotopy classes of continuous maps from $X$ to $Y.$ Let $A$ be an abelian group and $n \geq 1.$ Then $\alpha \in [X,Y]$ gives a homomorphism between cohomology groups
$H^n(\alpha) : H^n(Y;A) \rightarrow H^n(X;A).$ If $Y$ is the Eilenberg-Maclane space $K(A,n),$ then there exists special $c \in H^n(K(A,n),A)$ such that for every CW complex $X,$ the map $\alpha \rightarrow H^n(\alpha)(c) \in H^n(X;A)$ is a bijection, 
see (\cite{hatcher}, Theorem 4.57). This result implies that the map $\alpha \rightarrow \pi_1(\alpha)(\pi_1(\mathbb T^{m-n})) \subset \pi_1(\mathbb T^m)$ is a bijection between homotopy classes and subgroups of $\mathbb Z^m.$
\end{remark}
\section{Research Questions}\label{sec4}
\begin{enumerate}
\item Can the results in Section 3 be extended to locally finite multisets?
\item Example 3 includes a Besicovitch almost periodic set with a dilation and higher dimensional ones include
Penrose and Amman tilings described by Baake in \cite{baake}. A conjecture of Lagarias and Wang that we proved in
\cite{lawton} shows this is impossible if $K$ is a real analytic set. 
Do any Bohr almost periodic sets admit dilations?
\item Does this conjecture hold: $\Lambda \subset \mathbb R^n$ is a FQ 
iff there exists a toral compactification $(\mathbb T^m,\psi),$ $p_1,...,p_n \in T(\mathbb T^m),$ and
$\Lambda$ is the set of common zeros in $\mathbb C^n$ of the entire extensions $P_j$ of $p_j \circ \psi$ for $j = 1,...,n?$ Theorem \ref{thm1} validates this conjecture for $n = 1.$ We suggest deriving a Poisson summation formula, as
Lev and Olevskii did in \cite{levolevskii1} for $n = 1,$ using multidimension residue methods described by Tsikh \cite{tsikh} to compute an integral 
$\int_{\mathbb R^n} \frac{F}{P_1\cdots P_n}$ 
for a class of entire functions $F.$  We record that the hypothesis of the conjecture implies the map $ : \mathbb C^n \rightarrow \mathbb C^n$ defined by $(z_1,...,z_n) \rightarrow (P_1(z_1,...,z_n),...,P_n(z_1,...,z_n))$ satisfies conditions ensuring the existence of local Grothendieck residues (\cite{tsikh}, p. 14).
\end{enumerate}
{\bf Acknowledgments} The author thanks Alexander Olevskii, Yves Meyer and Peter Sarnak for sharing their knowledge of crystaline measures and Fourier quasicrystals and August Tsikh for suggesting multidimensional residues.

\Finishall
\end{document}